\documentclass[
11pt]
{amsart}

\usepackage[pagebackref]{hyperref}
\usepackage[margin=1.2in, marginpar=0.5in, marginparsep=.1in]{geometry}
\usepackage{hyperref}
\usepackage{cite}
\usepackage{amssymb}
\usepackage{amsmath}
\usepackage{amsthm}
\usepackage{amsfonts}
\usepackage{graphicx}%
\usepackage{comment}
\usepackage{bbm}
\usepackage{xcolor}
\usepackage[toc,page]{appendix}
\usepackage{mathtools}
\usepackage{amsmath}

\numberwithin{equation}{section}

\theoremstyle{plain} 
\newtheorem{thm}{Theorem}[section] 
\newtheorem{assumption}[thm]{Assumption}
 
\newtheorem{lem}[thm]{Lemma} 
\newtheorem{prop}[thm]{Proposition} 
\newtheorem{notation}[thm]{Notation} 
\newtheorem{rmk}[thm]{Remark}

\newcommand{\R}{\mathbb{R}}  
\newcommand{\E}{\mathbb{E}} 
\newcommand{\Prob}{\mathbb{P}}

\title[Chung's LIL and small balls for spherical random fields]{A note on small probabilities for spherical random fields at a critical regime}

\author[Carfagnini]{Marco Carfagnini}
\address{School of Mathematics and Statistics\\
University of Melbourne \\
Parkville VIC 3010, Australia}
\email{marco.carfagnini@unimelb.edu.au}

\keywords{Time-dependent spherical random fields, spherical harmonics, small ball probabilities, Chung's law of the iterated logarithm.}

\subjclass{60G60, 60G17, 60G15, 42C05. }


\begin{document}

	\begin{abstract}
    We consider time-dependent space isotropic and time stationary spherical Gaussian random fields. We establish  Chung's law of the iterated logarithm  and solve the  small probabilities problem. Our results depend on the high-frequency behaviour of the angular power spectrum: the speed of decay of the small ball probability is faster as either the memory parameter or the space-parameter decreases. \end{abstract}

\maketitle


\section{Introduction}
The analysis of limiting behavior of stochastic processes and random fields has been extensively studied by many authors, including sample path properties \cite{DurastantiMarinucciTodino, BaldiRoynette1992b, LX19,LX23, LWX15, LuMaXi, Yimin, MWX13, T, WX20, Xia07, Xia09}, small probabilities \cite{Carfagnini2022, CarfagniniTodino25, ChenLi2003, KuelbsLi1993a}, and Chung's law of the iterated logarithm \cite{KhoshnevisanShi1998, LX10, Remillard1994, WangXiaoBM, carfagninifoldesherzog2022} for several classes of random processes, including scalar and vector valued random fields and degenerate diffusions. We refer to \cite{NP23, Lif,LiShao2001} and references therein for more complete surveys.

The main purpose of this note is to provide  small ball probabilities and Chung's law of the iterated logarithm for spherical random fields at a critical regime. We recall that a real-valued random field $\{ X_t , t \in \R^{n}\}$ satisfies Chung's law of the iterated logarithm (LIL) at $t_{0} \in \mathbb R^{n}$ with rate function $\phi$ if there exists a constant $C$ such that 
\begin{align}
    & \liminf_{r \rightarrow 0} \phi(r) \max_{t\in B_{r} (t_{0})} \vert X_t - X_{t_{0}} \vert =C  \; \; \; a.s. \label{eqn.chung.LIL}
\end{align}
where $B_{r} (t_{0} )$ is an appropriate ball of radius $r>0$ centered at $t_{0}$.
 The small ball problem (SBP) consists in finding a function $\theta$ and a constant $c$ such that 
\begin{equation}\label{eqn.sd.general}
    \lim_{\varepsilon \rightarrow 0} \theta (\varepsilon) \log \Prob \left( \max_{t\in B_{r} (t_{0})} \vert X_t -X_{t_0} \vert  < \varepsilon \right) =c. 
\end{equation} 
When $X_t$ is a Brownian motion and $t_{0} = 0$, it was proven in a famous paper by K.-L. Chung in 1948 \cite{Chung1948} that \eqref{eqn.chung.LIL} and \eqref{eqn.sd.general} hold with $\phi (r) =  \sqrt{\frac{\log |\log r|}{r}}$, $\theta(\varepsilon)= -\varepsilon^{2}$, and $c=C^{2}= \frac{\pi^{2}}{8}$.

Here, we consider an isotropic centered spherical Gaussian random field $\{ T(x) , \; x \in \mathbb S^{2}\}$ with covariance function given in terms of the power spectrum $\{C_{\ell}, \; \ell =0,1,2,\ldots\}$. Over the past few decades, the analysis of spherical random fields has drawn lot of interest in light of its applications to the analysis of Cosmic Microwave Background radiation data, see \cite{Plank2014, Planck}. Following theoretical and observational evidence on CMB radiation data, the  power spectrum is usually assumed to have the following decay 
\begin{equation}\label{eqn.power.spec.decay}
    C_{\ell} = G(\ell) \ell^{-\alpha}, \quad \ell =1,2,\ldots,
\end{equation}
for some $\alpha >2$, where $G(\ell)$ is a positive bounded function. We refer to Section 2 for more details.

As pointed out in \cite{LX10}, one of the main ingredients in proving Chung's LIL for random fields is a lower bound on the conditional variance, known as strong local nondeterminism. For spherical random fields satisfying condition \eqref{eqn.power.spec.decay} with $2<\alpha <4$, the strong local nondeterminism has been proved in \cite[Theorem 1]{MXL}. More precisely, it is shown that 
\begin{equation}\label{eqn.MLX.variance}
        \text{Var} \left( T(x_{0}) \, | \, T(x_{1} ), \ldots , T(x_{n})   \right) \geqslant c \min_{1\leqslant k \leqslant n} \left(d_{\mathbb{S}^{2}} (x_{0}, x_{k} )\right)^{
        \alpha-2},
    \end{equation}
for all sufficiently close $x_{0},\ldots , x_{n} \in \mathbb{S}^{2}$,  where $c>0$ is a finite constant. The  bound \eqref{eqn.MLX.variance} has then been used to characterize sample paths behavior of spherical random fields, see \cite[Theorem 2]{MXL}. Note that when $2<\alpha<4$, the field's trajectories present a fractal behavior  since the modulus of continuity decays slower than linearly with respect to the angular distance, hence the sample function $T(x)$ is not differentiable  \cite[Theorem 2]{MXL}. In \cite{MXL} it is conjectured that \eqref{eqn.MLX.variance} holds for $\alpha=4$ in the form 
\begin{align}
   & \text{Var} \left( T(x_{0}) \, | \, T(x_{1} ), \ldots , T(x_{n})   \right) \geqslant c \min_{1\leqslant k \leqslant n} d_{\mathbb{S}^{2}} (x_{0}, x_{k} )^{
        2}\vert \log d_{\mathbb{S}^{2}} (x_{0}, x_{k} )\vert.\label{eqn.conjecture.4}
\end{align}

The strong local non-determinism \eqref{eqn.MLX.variance} is a key ingredient in proving small ball estimates and Chung's LIL. It has been used in \cite{LX10} to prove Chung's LIL for stationary Gaussian random fields on $\R^{n}$; and more recently in \cite{CarfagniniTodino25} to prove small fluctuations for time-dependent spherical random fields. In particular, \cite[Theorem 2.11 and Theorem 2.12]{CarfagniniTodino25} prove  \eqref{eqn.chung.LIL} and \eqref{eqn.sd.general} for an isotropic centered spherical Gaussian random field $\{ T(x) , \; x \in \mathbb S^{2}\}$ with power spectrum satisfying \eqref{eqn.power.spec.decay} with $2<\alpha <4$. There, it is shown that  \eqref{eqn.chung.LIL} and \eqref{eqn.sd.general} hold with 
\begin{align*}
    & \phi(r) = \left(\frac{\log \vert \log r \vert}{r^{2}} \right)^{\frac{\alpha-2}{4}}, & \theta (\varepsilon) = - \varepsilon^{\frac{4}{\alpha-2}}.
\end{align*}
We remark that such rates for isotropic spherical random fields   satisfying\eqref{eqn.power.spec.decay}  with $2<\alpha<4$ have been proved originally in \cite{LanXiao18}. Besides Euclidean and spherical case, the strong local nondeterminism, and its applications to the modulus of continuity, has also been studied for istropic Gaussian random fields on compact two-points homogeneous spaces, see \cite{LuMaXi}.

The goal of this note is to prove \eqref{eqn.chung.LIL} and \eqref{eqn.sd.general} for an isotropic centered spherical Gaussian random field in the critical regime, that is, satisfying \eqref{eqn.power.spec.decay} with $\alpha=4$. More precisely, in Theorem \ref{thm.main.new} we show that, if $\alpha=4$ in \eqref{eqn.power.spec.decay} and if \eqref{eqn.conjecture.4} holds, then \eqref{eqn.chung.LIL} and \eqref{eqn.sd.general} are satisfied with 
\begin{align*}
    & \phi(r) = \left(\frac{\log \vert \log r \vert}{r^{2}\vert \log r\vert} \right)^{\frac{1}{2}}, & \theta (\varepsilon) = - \varepsilon^{2} \vert \log \varepsilon \vert^{-1}.
\end{align*}
The extra $\vert \log \varepsilon \vert^{-1}$ and $\vert \log r \vert$ factors which are not present in the $2<\alpha<4$ case come from  the asymptotic of the Lambert function, see Proposition \ref{prop.weak.SD} for more details. Once the correct rates $\phi$ and $\theta$ are identified, the proof of Theorem \ref{thm.main.new} then relies on  Talagrand's bounds for small probabilities of Gaussian fields, and Borel Cantelli Lemmas.

The paper is organized as follows. Section 2 contains background material on spherical random fields and Talagrand bounds on small probabilities for Gaussian random fields. Some preliminary results on the SBP and Chung's LIL are the content of Section 3, and the proof of Theorem \ref{thm.main.new} is in Section 4.

\section{Background and main results}\label{sec2}

\subsection{Talagrand bounds for Gaussian fields} Small ball probabilities of random fields have been extensively studied, see \cite{MWX13,LX,Xia09,X97} and references therein. In this sections we recall results from \cite{T} that we will use throughout the paper. The canonical metric $d_{X}$ associated to a random field $\{ X_{t} , \, t\in \R^{n} \}$ is defined as
 \begin{equation}\label{eqn.canonical.metric}
     d_{X} (t,s) := \sqrt{\E \left[ ( X_{t} - X_{s} )^{2} \right]}.
 \end{equation}
One of the main ingredients in our proofs is Talagrand's bounds on small ball probabilities for Gaussian processes.

\begin{lem} \cite[Lemma 2.1 and Lemma 2.2]{T}\label{lemma.talagrand}
    Let $\{ X_{t} , \, t\in K \}$ be a separable, real-valued, mean zero Gaussian process indexed by a compact set $K$ with canonical metric $d_{X}$. Let $N(K,d_{X} , \varepsilon)$ be the smallest number of $d_{X}$-balls of radius $\varepsilon$ needed to cover the set $K$. Suppose that there is a function $\Psi : (0,\delta ) \rightarrow (0 , \infty)$ such that $N(K,d_{X} , \varepsilon) \leqslant \Psi (\varepsilon)$ for all $\varepsilon \in (0,\delta)$ and there are constants $1< a_{1} \leqslant a_{2}$ such that for all $\varepsilon \in (0,\delta)$
    \begin{align}\label{eqn.condition.talagrand}
        a_{1} \Psi (\varepsilon ) \leqslant \Psi \left(\frac{\varepsilon}{2} \right) \leqslant a_{2} \Psi (\varepsilon).
    \end{align}
    Then, there is a finite constant $k$ depending only on $a_{1}$ and $a_{2}$ such that for all $u\in (0,\delta)$
    \begin{align}
        \mathbb{P} \left( \sup_{s,t \in K} \vert X_{t} - X_{s} \vert \leqslant u \right) \geqslant \exp\left(- k \Psi (u) \right).
    \end{align}
    Moreover, there exists a positive constant $c_{1}$ such that for any $u>0$
    \begin{equation}\label{lemma.talagrand.diameter}
        \mathbb{P} \left( \max_{s,t \in K} \vert X_{s} - X_{t} \vert \geqslant c_{1} \left( u+ \int_{0}^{D} \sqrt{\log N(K,d_{X} , \varepsilon)} d\varepsilon \right) \right) \leqslant \exp \left(- \frac{u^{2}}{D^{2}} \right),        
    \end{equation}
    where  $D$ is the diameter of $K$ with respect to $d_{X}$.
\end{lem}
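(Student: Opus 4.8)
This is a classical result of Talagrand; the natural plan is to treat its two ingredients separately, since the tail bound \eqref{lemma.talagrand.diameter} and the small ball bound are proved by quite different means. For \eqref{lemma.talagrand.diameter} the plan is to combine two standard Gaussian facts. First, Dudley's metric entropy bound gives
\begin{equation*}
  \E\Big[\,\max_{s,t\in K}\vert X_s-X_t\vert\,\Big]\ \leqslant\ C\int_0^{D}\sqrt{\log N(K,d_X,\varepsilon)}\,d\varepsilon
\end{equation*}
for a universal constant $C$, with $D$ the $d_X$-diameter of $K$. Second, I would apply the Borell--Tsirelson--Ibragimov--Sudakov concentration inequality to $Z:=\max_{s,t\in K}\vert X_s-X_t\vert$, which (after reducing to a countable dense subset, using separability) is the supremum of the centered Gaussian process $(X_s-X_t)_{(s,t)\in K\times K}$, whose supremal variance is $\sup_{s,t}\E[(X_s-X_t)^2]=D^2$; this yields $\Prob(Z\geqslant\E[Z]+v)\leqslant\exp(-v^2/(2D^2))$ for all $v>0$. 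Taking $v=u$ and absorbing $\E[Z]$ and the factor $2$ into $c_1$ gives the claim.

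For the small ball lower bound the plan is a multiscale chaining argument combined with the Gaussian correlation inequality. Fix $u\in(0,\delta)$. Using \eqref{eqn.condition.talagrand} — which forces $\Psi$ to grow at a geometric rate (with ratio between $a_1$ and $a_2$) along dyadic scales — I would build a nested sequence of finite partitions $\mathcal P_0=\{K\}\succ\mathcal P_1\succ\cdots\succ\mathcal P_M$ of $K$, where each cell of $\mathcal P_j$ has $d_X$-diameter $\lesssim 2^{-j}D$, each parent cell splits into at most $N(K,d_X,2^{-j}D)\leqslant\Psi(2^{-j}D)$ children, the stopping level $M$ is the first with $2^{-M}D\lesssim u$, and a representative $t_A\in A$ is fixed in each cell $A$ (with $p(A)$ its parent). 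Writing $\phi_j(s)$ for the representative of the level-$j$ cell of $s$, continuity of $X$ yields the telescoping bound
\begin{multline*}
  \max_{s,t\in K}\vert X_s-X_t\vert\ \leqslant\ 2\sum_{j=1}^{M}\ \max_{A\in\mathcal P_j}\big\vert X_{t_A}-X_{t_{p(A)}}\big\vert\\
  +\ 2\,\max_{A\in\mathcal P_M}\,\sup_{s\in A}\big\vert X_s-X_{t_A}\big\vert .
\end{multline*}
Hence, for positive budgets $u_1,\dots,u_M$ with $2\sum_j u_j\leqslant u/2$, the small ball event contains the intersection of the events $\{\vert X_{t_A}-X_{t_{p(A)}}\vert\leqslant u_j\}$, over $A\in\mathcal P_j$ and $1\leqslant j\leqslant M$, with $\{\max_{A\in\mathcal P_M}\sup_{s\in A}\vert X_s-X_{t_A}\vert\leqslant u/4\}$. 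Each of these is a symmetric convex set in path space, so I would apply the Gaussian correlation inequality (using \v{S}id\'{a}k's inequality to peel off the finitely many slabs $\{\vert X_{t_A}-X_{t_{p(A)}}\vert\leqslant u_j\}$): the ``interior oscillation'' factor is bounded below, via one more use of the correlation inequality, by $q_0^{\#\mathcal P_M}$ with $q_0=q_0(a_2)>0$ — each finest cell has $d_X$-diameter $\lesssim u$ and, by the regularity of $\Psi$, expected internal oscillation $O(u)$, so that oscillation $\leqslant u/4$ has probability bounded away from $0$ — and since $\#\mathcal P_M\lesssim\Psi(u)$ this factor is $\geqslant e^{-C\Psi(u)}$. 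With the geometric choice $u_j\asymp u\,2^{-(M-j)}$, the ratio of each threshold $u_j$ to the standard deviation of $X_{t_A}-X_{t_{p(A)}}$ is $\asymp 4^{-(M-j)}$, whence $-\log\Prob(\vert X_{t_A}-X_{t_{p(A)}}\vert\leqslant u_j)\lesssim (M-j)+1$; summing this against $\#\mathcal P_j\leqslant\Psi(2^{-j}D)\lesssim\Psi(u)\,a_1^{-(M-j)}$ gives a total $\lesssim\Psi(u)\sum_{i\geqslant0}i\,a_1^{-i}\asymp\Psi(u)$. Combining the two estimates yields $\Prob(\max_{s,t}\vert X_s-X_t\vert\leqslant u)\geqslant e^{-k\Psi(u)}$ with $k=k(a_1,a_2)$.

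The hard part will be the lower bound, and in particular the calibration of the stopping level $M$ and the budgets $u_j$ so that the chaining sum comes out of the correct order $\Psi(u)$ rather than, say, $\Psi(u)\log(1/u)$: the finest cells must be complex enough that $\Psi(u)$ absorbs their total oscillation cost, yet simple enough that each has internal oscillation $O(u)$ with probability bounded away from $0$, and the budgets must decay geometrically toward the finest scale — it is precisely here that the doubling hypothesis \eqref{eqn.condition.talagrand} is used. Realizing the nested partitions with simultaneous control of cell diameters and of $\#\mathcal P_j$ in a general metric space also requires some care, but is by now standard; the tail bound, by contrast, is routine.
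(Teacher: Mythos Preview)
The paper does not prove this lemma at all: it is quoted verbatim from Talagrand's 1995 paper (Lemmas~2.1 and~2.2 there) as background, with no argument supplied. There is therefore nothing in the present paper to compare your sketch against.

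That said, your outline is a faithful account of how these bounds are established. The tail bound via Dudley's entropy integral plus Borell--TIS concentration is exactly the standard route and is correct as written. For the small-ball lower bound, your chaining-plus-correlation scheme is the right architecture; two remarks are worth making. First, Talagrand's original 1995 proof predates the full Gaussian correlation inequality (Royen, 2014), so where you invoke GCI to factorize the finest-scale oscillation event $\{\max_{A\in\mathcal P_M}\sup_{s\in A}|X_s-X_{t_A}|\leqslant u/4\}$ into a product over cells, his argument proceeds differently---essentially by continuing the chaining to all scales and using only \v{S}id\'ak's slab inequality throughout. Your approach is cleaner but anachronistic relative to the cited source. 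Second, the claim that each finest cell has ``expected internal oscillation $O(u)$'' is the one place where your sketch is loose: Dudley's bound over a cell of $d_X$-diameter $\lesssim u$ gives $\int_0^{u}\sqrt{\log N(A,d_X,\varepsilon)}\,d\varepsilon$, and bounding $N(A,d_X,\varepsilon)$ by $\Psi(\varepsilon)$ alone yields an extra $\sqrt{\log\Psi(u)}$ factor. One fixes this either by running the chaining to infinity (so no ``interior'' term remains) or by choosing the stopping diameter to be $c\,u$ with $c$ small enough that Borell--TIS still gives a uniformly positive probability; the doubling hypothesis is what makes either route close. You flag this calibration as the hard part, which is accurate.
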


\subsection{Spherical Gaussian random fields} 

Let $\mathbb{S}^2$ be the unit 2-dimensional sphere and $\Delta_{\mathbb{S}^2}$ be the Laplace-Beltrami operator on $\mathbb{S}^2$ defined as
\begin{equation*}
\dfrac{1}{\sin \theta }\dfrac{\partial }{\partial \theta }\bigg\{\sin \theta 
\dfrac{\partial }{\partial \theta }\bigg\}+\dfrac{1}{\sin ^{2}\theta }\dfrac{%
	\partial ^{2}}{\partial \varphi ^{2}},\mbox{ }0\leq \theta \leq \pi ,\mbox{ }%
0\leq \varphi < 2\pi .
\end{equation*} 
It is well-known that the spectrum of $\Delta_{\mathbb{S}^2}$ is purely discrete and its eigenvalues are of the form $-\lambda_\ell=-\ell(\ell+1)$, where $\ell\in \mathbb N$. The eigenfunctions associated to the eigenvalue $\lambda_\ell$ are the spherical harmonics $\{Y_{\ell m}, \, m=-\ell,\ldots, \ell\}$. Throughout this paper we consider real-valued spherical harmonics, which can be expressed in terms of Legendre associated functions in the following way, see \cite[Remark 3.37]{MP12}
\begin{equation*}
    Y_{\ell m}(\theta,\varphi)= \begin{cases}
\sqrt{\frac{2\ell+1}{2\pi} \frac{(\ell-m)!}{(\ell+m)!}}P_{\ell }^{m} (\cos \theta) \cos(m\varphi) & \mbox{ for } m= 1,\dots,\ell,\\
\sqrt{\frac{2\ell+1}{2\pi}}P_{\ell}^m (\cos \theta)  & \mbox{ for } m=0,\\
\sqrt{\frac{2\ell+1}{2\pi} \frac{(\ell-m)!}{(\ell+m)!}}P_{\ell}^{-m} (\cos \theta) \sin(-m\varphi) & \mbox{ for } m = -\ell,\dots,-1,\\
    \end{cases}
\end{equation*}
where 
\begin{align*}
    & P_\ell:[-1,1]\to \mathbb{R} & P_\ell(t)= \frac{1}{2^{\ell} \ell!} \frac{d^\ell}{dt^\ell}(t^2-1)^\ell 
    \\
    & P_\ell^m(t):[-1,1] \to \mathbb{R}   & P_\ell^m(t):= (1-t^2)^{m/2} \frac{d^m}{dt^m}P_\ell(t)
\end{align*}
for $ \ell=1,2,\dots$ and  $m=-\ell, \ldots, \ell$. The  Legendre polynomial of degree $\ell$, $P_{\ell}$, are normalized so that $P_\ell(1)=1$. Note that the set of spherical harmonics forms a complete orthonormal basis of $L^2(\mathbb S^2, dx)$, where $dx$ denotes the Lebesgue measure. In particular, the functions $Y_{\ell m}$  satisfy
$$\Delta_{\mathbb{S}^2}Y_{\ell m}+\lambda_\ell Y_{\ell m}=0,$$ 
and the addition and duplication formulae read
\begin{equation}
\sum_{m=-\ell }^{\ell }Y_{\ell m}(x){Y}_{\ell m}(y)=\frac{2\ell +1}{4\pi }%
P_{\ell }(\left\langle x,y\right\rangle )\text{ ,}  \label{addition}
\end{equation}%
\begin{equation}
\int_{\mathbb{S}^{2}}\frac{2\ell +1}{4\pi }P_{\ell }(\left\langle
x,z\right\rangle )\frac{2\ell +1}{4\pi }P_{\ell }(\left\langle
z,y\right\rangle )dz=\frac{2\ell +1}{4\pi }P_{\ell }(\left\langle
x,y\right\rangle )\text{ ,}  \label{duplication}
\end{equation}
for all $x,y \in \mathbb{S}^{2}$.

A real-valued spherical random field $T$ is a collection of centered random variables defined on a common probability space $(\Omega, \mathcal{F}, \mathbb{P})$ such that  $T: \Omega \times \mathbb S^{2} \longrightarrow \R$ is an $\mathcal{F}\times \mathcal{F} ( \mathbb S^{2})$-measurable map, where $\mathcal{F} ( \mathbb S^{2} )$ denotes the Borel sigma-algebra of $\mathbb S^{2}$. We are interested in mean-square continuous isotropic centered Gaussian spherical random fields.  We recall that a spherical random field $\{T(x),\, x\in \mathbb{S}^2\}$ is isotropic if $T(x)$ and $T(g . x)$ have the same distribution for any $g\in \text{SO}(3)$. It is well-known that such a field can be represented as, see \cite[Theorem 5.13]{MP12}
\begin{eqnarray}\label{field}
T(x)&=&\sum_{\ell=0}^{\infty}\sqrt{C_\ell}T_\ell(x)=\sum_{\ell=1}^{\infty} \sqrt{C_\ell} \sum_{m=-\ell}^{\ell} a_{\ell,m} Y_{\ell,m}(x), \quad x\in \mathbb S^2,\\
T_\ell(x)&:=&\sum_{m=-\ell}^{\ell} a_{\ell,m} Y_{\ell,m}(x), \quad x\in \mathbb S^2,
\end{eqnarray}
where the random variables $\{a_{\ell,m}, m=-\ell,\dots, \ell\}$ are i.i.d. standard Gaussian. 
The sequence $\{C_\ell\}_{\ell=0,1,\dots}$ is called angular power spectrum of $T(x)$ and it is non-negative and such that $\sum_{\ell} C_\ell \frac{2\ell+1}{4\pi}=\mathbb{E}[T^2]<\infty$ (see \cite{MP12, MP13}). The series  \ref{field} converges in $L^2(\Omega \times \mathbb{S}^2)$, and in   $L^2(\Omega)$ at every fixed $x$. The power spectrum is usually assumed to have the following decay 
\begin{equation}\label{eqn.power.spec.decay2}
    C_{\ell} = G(\ell) \ell^{-\alpha}, \quad \ell =1,2,\ldots,
\end{equation}
where $\alpha>2$ is constant and the function $G(\ell)$ is bounded above and below by positive constants. Condition \eqref{eqn.power.spec.decay2} is aligned with the theoretical and observational evidence on Cosmic Microwave Background radiation data, which has been one of the main motivating areas for the analysis of spherical fields over the last decade. The assumption $\alpha >2$ is needed for the field to have finite variance. Indeed, applying (\ref{addition}) it is easy to see that
$$\mathbb{E}[T_\ell(x)T_\ell(y)]=\frac{2\ell+1}{4\pi}P_\ell(\langle x,y \rangle)$$ and then
the covariance function of $T(x)$ is given by
$$\Gamma(x,y)=\mathbb{E}[T(x)T(y)]= \sum_{\ell=0}^{\infty}  C_\ell \frac{2\ell+1}{4\pi} P_\ell(\langle x,y \rangle) \quad \mbox{ for all } x,y \in \mathbb{S}^2.$$

\begin{assumption}\label{thm.non.determ.1}
    Let $\{ T (x) , \, x\in \mathbb{S}^{2} \}$  be an isotropic Gaussian field with 
    angular power spectrum such that 
\begin{align}
    C_{\ell} = G(\ell) \ell^{-4} >0 , \; \; \ell = 1, 2 , \ldots \label{condition.power.spectrum}
\end{align}
and we assume that 
\begin{align*}
    c^{-1} \leqslant G(\ell ) \leqslant c,
\end{align*}
for some finite constant $c \geqslant 1$. Moreover, we assume that there exist two positive and finite constants $c_{2}$ and $\varepsilon_{0}$ such that for all integers $n\geqslant 1$, and all $x_{0},\ldots , x_{n} \in \mathbb{S}^{2}$ with $\min_{1\leqslant k \leqslant n} d_{\mathbb{S}^{2}} (x_{0} , x_{k}) \leqslant \varepsilon_{0}$ we have that 
    \begin{equation}\label{eqn.bound.variance.1}
        \text{Var} \left( T(x_{0}) \, | \, T(x_{1} ), \ldots , T(x_{n})   \right) \geqslant c_{2} \min_{1\leqslant k \leqslant n} \rho \left(d_{\mathbb{S}^{2}} (x_{0}, x_{k} )\right)^{2},
    \end{equation}
    where 
    \begin{equation}\label{eqn.rho}
    \rho (t) :=  t \sqrt{\vert \ln t \vert } \;\; t>0 , \quad \rho (0):=0.
\end{equation}
\end{assumption}

If \ref{condition.power.spectrum} holds, then by \cite[Lemma 4]{MXL} there exist constants $1\leqslant D <\infty$ and $0 < \varepsilon <1$ such that for all $x,y\in \mathbb{S}^{2}$ with $d_{\mathbb{S}^{2}} (x,y)\leqslant \varepsilon$, one has that 
\begin{align*}
    D^{-1} \rho^{2} \left(d_{\mathbb{S}^{2}} (x,y) \right)\leqslant d_{T}^{2} (x,y) \leqslant D\rho^{2} \left(d_{\mathbb{S}^{2}} (x,y) \right),
\end{align*}
where $\rho$ is given by \eqref{eqn.rho}.

\subsection{Main Results}
Let us now introduce the main object of study and the main result of this paper. 

\begin{notation}\label{not.max}
    For any $x\in \mathbb S^{2}$ and $r>0$ let us set 
    \begin{align*}
       M_{r} (x) := \max_{y\in B_{r} (x)} \vert T(y) - T(x) \vert ,
    \end{align*}
    where $B_{r} (x)$ denotes the geodesic ball in $\mathbb S^{2}$ centered in $x\in \mathbb S^{2}$; and for any $r>0$ let  
\begin{align}\label{eqn.phi}
      \phi (r) :=\left( \frac{\log \vert \log r \vert}{r^{2} \vert \log r \vert} \right)^{\frac{1}{2}} .
\end{align}
\end{notation}

\begin{thm}\label{thm.main.new}
    Let $\{ T (x) , \, x\in \mathbb{S}^{2} \}$  be an isotropic Gaussian field satisfying Assumption \ref{thm.non.determ.1}. Then there exists a finite constant $p>0$ such that 
    \begin{align}
        &  \liminf_{r\rightarrow 0} \phi (r) M_{r} (x) = p, \; \text{a.s.}\label{eqn.Chung.lil.sphere} 
        \\
        & \lim_{\varepsilon \rightarrow 0} - \varepsilon^{2} \vert \log \varepsilon\vert^{-1} \log \mathbb{P} \left( M_{r} (x) < \varepsilon \right) = r^{2} p^{2}.  \label{eqn.small.deviations}
    \end{align}
    for all $x\in \mathbb S^{2}$.
\end{thm}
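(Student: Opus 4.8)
The plan is to prove the small ball estimate \eqref{eqn.small.deviations} first, including the existence and the exact value of the limit, and then to deduce Chung's LIL \eqref{eqn.Chung.lil.sphere} from it by Borel--Cantelli arguments; the constant $p$ is the one read off from \eqref{eqn.small.deviations}. A preliminary reduction: by isotropy, for any $g\in\mathrm{SO}(3)$ the centered Gaussian families $\{T(y)-T(x)\}_{y\in B_r(x)}$ and $\{T(y)-T(g.x)\}_{y\in B_r(g.x)}$ have the same law, so $\mathbb{P}(M_r(x)<\varepsilon)$ does not depend on $x$, and neither does the law of $\liminf_{r\to0}\phi(r)M_r(x)$. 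Thus it suffices to fix $x$, show that the limit in \eqref{eqn.small.deviations} exists, is positive and finite and proportional to $r^2$, define $p$ by $p^2:=r^{-2}\lim_{\varepsilon\to0}(-\varepsilon^2|\log\varepsilon|^{-1}\log\mathbb{P}(M_r(x)<\varepsilon))$, and check that \eqref{eqn.Chung.lil.sphere} holds with this $p$.

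First I would pin down the rate via Lemma \ref{lemma.talagrand}. Assumption \ref{thm.non.determ.1} (through \cite[Lemma 4]{MXL}) gives $d_T(y,z)^2\asymp\rho(d_{\mathbb{S}^2}(y,z))^2$ for nearby $y,z$, and the asymptotics of the Lambert function (Proposition \ref{prop.weak.SD}) give $\rho^{-1}(u)=u|\log u|^{-1/2}(1+o(1))$ as $u\to0$; a volume count in the two-dimensional geodesic ball then yields $N(B_r(x),d_T,\varepsilon)\asymp r^2|\log\varepsilon|\,\varepsilon^{-2}$. Taking $\Psi(\varepsilon)=C\,r^2|\log\varepsilon|\,\varepsilon^{-2}$ one has $\Psi(\varepsilon/2)/\Psi(\varepsilon)=4|\log(\varepsilon/2)|/|\log\varepsilon|\to4$, so \eqref{eqn.condition.talagrand} holds near $0$ with $a_1=3$, $a_2=5$; the lower bound in Lemma \ref{lemma.talagrand}, together with $\{M_r(x)\le u\}\supseteq\{\sup_{s,t\in B_r(x)}|T(s)-T(t)|\le u\}$, gives $\mathbb{P}(M_r(x)<\varepsilon)\ge\exp(-k\Psi(\varepsilon))$. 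For the matching upper bound I would pick a maximal set $x_1,\dots,x_N$ in $B_r(x)$ whose pairwise distances and whose distances to $x$ all exceed $\eta:=\rho^{-1}(A\varepsilon)$ for a large constant $A$, so that $N\asymp(r/\eta)^2\asymp A^{-2}r^2|\log\varepsilon|\,\varepsilon^{-2}$; since \eqref{eqn.bound.variance.1}, applied while also conditioning on $T(x)$ (which only lowers the conditional variance) and using that $\rho$ is increasing near $0$, gives $\mathrm{Var}\big(T(x_i)-T(x)\mid T(x_j)-T(x),\,j<i\big)\ge c_2\rho(\eta)^2=c_2A^2\varepsilon^2$, the standard successive-conditioning bound for Gaussian vectors gives $\mathbb{P}(M_r(x)<\varepsilon)\le\mathbb{P}(\max_i|T(x_i)-T(x)|<\varepsilon)\le\big(2/(A\sqrt{2\pi c_2})\big)^N\le\exp(-k'\Psi(\varepsilon))$ once $A$ is large. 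Together these give $0<K_1r^2\le\liminf_{\varepsilon\to0}\le\limsup_{\varepsilon\to0}\le K_2r^2$ for the quantity in \eqref{eqn.small.deviations}.

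To upgrade the two-sided rate to the limit I would run a blocking/scaling argument in the spirit of \cite{CarfagniniTodino25,LanXiao18}: covering $B_r(x)$ by $m^2$ geodesic balls of radius comparable to $r/m$, the Gaussian correlation inequality bounds $\mathbb{P}(M_r(x)<\varepsilon)$ from below by a product over the blocks, while the $N$-point/SLND estimate above bounds it from above by a comparable product; using that $\rho\circ d_{\mathbb{S}^2}$ is asymptotically scale invariant in this regime (the logarithmic factor in $\rho$ being negligible after rescaling), one obtains that $-\varepsilon^2|\log\varepsilon|^{-1}\log\mathbb{P}(M_r(x)<\varepsilon)$ changes only by $o(1)$ under $(r,\varepsilon)\mapsto(r/m,\varepsilon/m)$, which forces the limit to exist and to be proportional to $r^2$. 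I expect this step --- extracting the precise constant rather than merely the rate --- to be the main obstacle. Then $p$ is as in the first paragraph and \eqref{eqn.small.deviations} holds.

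For \eqref{eqn.Chung.lil.sphere}, note that $\phi(r)^{-2}=r^2|\log r|/\log|\log r|$ and $|\log(q\phi(r)^{-1})|\sim|\log r|$, so \eqref{eqn.small.deviations} gives $\mathbb{P}(M_r(x)<q\,\phi(r)^{-1})=|\log r|^{-(1+o(1))p^2/q^2}$ for fixed $q>0$. For $\liminf_{r\to0}\phi(r)M_r(x)\ge p$, fix $0<\delta<p$, $\theta\in(0,1)$, $r_n=\theta^n$: since $p^2/(p-\delta)^2>1$ the series $\sum_n\mathbb{P}(M_{r_n}(x)<(p-\delta)\phi(r_n)^{-1})$ converges, so by Borel--Cantelli $\phi(r_n)M_{r_n}(x)\ge p-\delta$ eventually, and the monotonicity $M_r(x)\ge M_{r_{n+1}}(x)$ on $[r_{n+1},r_n]$ together with $\phi(r_n)/\phi(r_{n+1})\to\theta$ upgrades this to $\liminf_{r\to0}\phi(r)M_r(x)\ge\theta(p-\delta)$; then let $\delta\to0$ and $\theta\to1$. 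For $\liminf_{r\to0}\phi(r)M_r(x)\le p$, fix $\delta>0$ and a sequence $r_n=\exp(-n^c)$ with $c\in(1,(p+\delta)^2/p^2)$, so that the scales are well separated ($r_{n+1}/r_n\to0$) while $\sum_n\mathbb{P}(M_{r_n}(x)<(p+\delta)\phi(r_n)^{-1})=\infty$ (possible since $p^2/(p+\delta)^2<1$); because the scales are well separated, \eqref{eqn.bound.variance.1} shows that conditioning $T$ on its values outside $B_{r_n}(x)$ leaves the conditional variance inside $B_{r_{n+1}}(x)$ comparable to $\rho(r_{n+1})^2$, so the events $\{M_{r_n}(x)<(p+\delta)\phi(r_n)^{-1}\}$ are asymptotically independent enough for the second Borel--Cantelli lemma and hence occur infinitely often; thus $\liminf_{r\to0}\phi(r)M_r(x)\le p+\delta$, and $\delta\to0$ concludes.
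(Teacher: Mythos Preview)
Your two-sided rate estimate (essentially Proposition \ref{prop.weak.SD}) is correct, but the step you yourself flag as ``the main obstacle'' is a genuine gap. The blocking you describe --- covering $B_r(x)$ by $m^2$ balls of radius $\sim r/m$ and using the Gaussian correlation inequality for the lower bound and SLND for the upper --- relates $\mathbb{P}(M_r(x)<\varepsilon)$ to $\mathbb{P}(M_{r/m}(\cdot)<\varepsilon)$ at the \emph{same} level $\varepsilon$; it does not give a relation under $(r,\varepsilon)\mapsto(r/m,\varepsilon/m)$. To rescale $\varepsilon$ you would need local self-similarity of the field, and at $\alpha=4$ there is none: $\rho(t)=t\sqrt{|\log t|}$ is not homogeneous, and ``the logarithmic factor being negligible after rescaling'' is precisely the statement that would have to be proved, not assumed. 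I do not see how to close this without a different idea. Your second Borel--Cantelli step has a similar gap: SLND bounds conditional variances from below but does not by itself make the events $\{M_{r_n}(x)<(p+\delta)\phi(r_n)^{-1}\}$ ``asymptotically independent enough'' for the divergence part of Borel--Cantelli.

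The paper sidesteps both issues by reversing the logic: it never proves directly that the small-ball limit exists. Writing $c_\pm$ for the $\limsup/\liminf$ of $-\varepsilon^2|\log\varepsilon|^{-1}\log\mathbb{P}(M_r(x)<\varepsilon)$ and $p:=\liminf_{r\to0}\phi(r)M_r(x)$, it shows via the first Borel--Cantelli (using the upper small-ball bound at any level $k<c_+$, along $r_n=R^{-n}$) that $p\ge\sqrt{c_+}$ a.s., and via the second Borel--Cantelli that $p\le\sqrt{c_-}$ a.s. Since trivially $c_-\le c_+$, this forces $c_-=c_+=p^2$: the small-ball limit exists, equals $r^2p^2$, and $p$ is a deterministic constant, all in one stroke, with no scaling argument needed. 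For the independence in the second Borel--Cantelli the paper does not use SLND at all; it uses the spectral decomposition $T=\Xi_n+\Theta_n$ with $\Xi_n=\sum_{d_n<\ell\le d_{n+1}}\sqrt{C_\ell}\,T_\ell$ and $d_n=\lfloor e^{n^2}\rfloor$, so that the $\Xi_n$ are \emph{exactly} independent (disjoint frequency bands), and then shows separately via the Talagrand diameter bound \eqref{lemma.talagrand.diameter} that the remainder $\Theta_n$ is negligible at scale $r_n=e^{-n}d_n^{-1}$.
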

We remark that the constant $p$ does not depend on $x\in \mathbb S^{2}$. Chung's LIL \eqref{eqn.Chung.lil.sphere} describes sample paths properties of the random field $T(x)$, and due to isotropy such properties do not depend on the fixed point $x\in \mathbb S^{2}$.

\section{Small probabilities}

\subsection{Small balls}
We first prove a weaker version of the small deviation principle \eqref{eqn.small.deviations}. Set 
\begin{align}\label{eqn.psi}
      \psi (r,\varepsilon) =  \frac{r^{2}}{\varepsilon^{2} \vert \log \varepsilon\vert^{-1}}.
\end{align}

\begin{prop}\label{prop.weak.SD}
    Let $\{ T (x) , \, x\in \mathbb{S}^{2} \}$  be an isotropic Gaussian field.

    (i) If \eqref{condition.power.spectrum} holds, then there exists a positive finite constant $A_{1}$ such that for all $\varepsilon >0$ and $r>\varepsilon$ we have that 
    \begin{align}\label{eqn.small.ball.lower.bound}
    \mathbb{P} \left(M_{r} (x) < \varepsilon \right) \geqslant \exp \left( - A_{1} \psi (r, \varepsilon) \right)    ,
    \end{align}

    (ii) If $\{ T (x) , \, x\in \mathbb{S}^{2} \}$ satisfies \eqref{condition.power.spectrum} and \eqref{eqn.bound.variance.1}, then there exists a positive finite constant $A_{2}$ such that for all $\varepsilon >0$ and $r>\varepsilon$ we have that
    \begin{align}\label{eqn.small.ball.upper.bound}
    \mathbb{P} \left( M_{r}(x) < \varepsilon \right) \leqslant \exp \left( -A_{2} \psi (r, \varepsilon) \right)    ,
    \end{align}
    where $\psi (r,\varepsilon)$ is given by \eqref{eqn.psi}.
    \end{prop}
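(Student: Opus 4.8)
The plan is to deduce both inequalities from the behaviour of the canonical metric $d_{T}$ near the diagonal. Under \eqref{condition.power.spectrum}, \cite[Lemma 4]{MXL} provides constants $1\leqslant D<\infty$ and $\varepsilon_{1}\in(0,1)$ with
\[
D^{-1}\rho\bigl(d_{\mathbb{S}^{2}}(x,y)\bigr)\leqslant d_{T}(x,y)\leqslant D\,\rho\bigl(d_{\mathbb{S}^{2}}(x,y)\bigr)
\qquad\text{for } d_{\mathbb{S}^{2}}(x,y)\leqslant\varepsilon_{1},
\]
with $\rho(t)=t\sqrt{\lvert\ln t\rvert}$ as in \eqref{eqn.rho}. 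The one genuinely new ingredient is the asymptotics of $\rho^{-1}$ near $0$: solving $\rho(\delta)=u$, i.e.\ $\delta^{2}\lvert\log\delta\rvert=u^{2}$, leads to $\log\delta=-\tfrac12 W(2u^{2})$ for a suitable branch of the Lambert function $W$, whose classical asymptotics give $\lvert\log\delta\rvert\sim\lvert\log u\rvert$ and hence $\rho^{-1}(u)\sim u/\sqrt{\lvert\log u\rvert}$ as $u\to0$. This is where the extra logarithm in $\psi$ comes from: a geodesic ball of radius $\delta$ has $d_{T}$-diameter of order $\rho(\delta)$, while $B_{r}(x)$ carries of order $(r/\delta)^{2}$ points at mutual geodesic distance $\geqslant\delta$, so taking $\delta\asymp\rho^{-1}(\varepsilon)\asymp\varepsilon/\sqrt{\lvert\log\varepsilon\rvert}$ turns that count into $r^{2}\lvert\log\varepsilon\rvert/\varepsilon^{2}=\psi(r,\varepsilon)$.

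For part (i) I would bound the metric entropy and then quote the first half of Lemma \ref{lemma.talagrand}. Covering $B_{r}(x)$ by $O\bigl((r/\delta)^{2}\bigr)$ geodesic balls of radius $\delta=\rho^{-1}\bigl(u/(2D)\bigr)$, each having $d_{T}$-diameter at most $2D\rho(\delta)=u$, gives $N(B_{r}(x),d_{T},u)\leqslant\Psi(u)$ with $\Psi(u):=c_{3}r^{2}\lvert\log u\rvert/u^{2}$ for small $u$; the doubling condition \eqref{eqn.condition.talagrand} holds for this $\Psi$ since $\Psi(u/2)/\Psi(u)=4\bigl(1+\log 2/\lvert\log u\rvert\bigr)\to4$. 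Lemma \ref{lemma.talagrand} then yields $\Prob\bigl(\sup_{s,t\in B_{r}(x)}\lvert T(s)-T(t)\rvert\leqslant\varepsilon\bigr)\geqslant\exp\bigl(-k\Psi(\varepsilon)\bigr)$, and since $x\in B_{r}(x)$ one has $M_{r}(x)\leqslant\sup_{s,t\in B_{r}(x)}\lvert T(s)-T(t)\rvert$, whence $\Prob\bigl(M_{r}(x)<\varepsilon\bigr)\geqslant\exp\bigl(-A_{1}\psi(r,\varepsilon)\bigr)$ with $A_{1}=kc_{3}$ for $\varepsilon$ small; a larger $A_{1}$ absorbs the remaining bounded range of $r$ and $\varepsilon$.

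For part (ii) I would run a sequential conditioning argument powered by the strong local nondeterminism \eqref{eqn.bound.variance.1}. Pick a maximal set $x=t_{0},t_{1},\dots,t_{N}$ of points of $B_{r\wedge\varepsilon_{1}}(x)$ at pairwise geodesic distance $\geqslant\delta$; a volume estimate gives $N\geqslant c_{4}(r\wedge\varepsilon_{1})^{2}/\delta^{2}$. As $M_{r}(x)\geqslant\max_{1\leqslant k\leqslant N}\lvert T(t_{k})-T(x)\rvert$, it suffices to bound $\Prob\bigl(\max_{1\leqslant k\leqslant N}\lvert T(t_{k})-T(t_{0})\rvert<\varepsilon\bigr)$. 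Conditioning successively on $T(t_{0}),\dots,T(t_{k-1})$, the conditional law of $T(t_{k})$ is Gaussian with conditional variance at least $c_{2}\min_{0\leqslant j\leqslant k-1}\rho\bigl(d_{\mathbb{S}^{2}}(t_{k},t_{j})\bigr)^{2}\geqslant c_{2}\rho(\delta)^{2}$ by \eqref{eqn.bound.variance.1} — which applies because all the $t_{k}$ lie in $B_{\varepsilon_{1}}(x)$, after shrinking $\varepsilon_{1}$ below $\varepsilon_{0}$ and below the value where $\rho$ ceases to be increasing — so Gaussian anti-concentration bounds each conditional probability of $\{\lvert T(t_{k})-T(t_{0})\rvert<\varepsilon\}$ by $2\varepsilon/\bigl(\sqrt{2\pi c_{2}}\,\rho(\delta)\bigr)$. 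Multiplying over $k$ gives $\Prob\bigl(M_{r}(x)<\varepsilon\bigr)\leqslant\bigl(2\varepsilon/(\sqrt{2\pi c_{2}}\,\rho(\delta))\bigr)^{N}$; choosing $\delta=\rho^{-1}(c_{5}\varepsilon)$ with $c_{5}$ large enough that the base equals $\tfrac12$ makes the right side $2^{-N}$, and by the inversion of $\rho$ above $N\geqslant c_{6}(r\wedge\varepsilon_{1})^{2}\lvert\log\varepsilon\rvert/\varepsilon^{2}\geqslant A_{2}\psi(r,\varepsilon)$ since $r$ is bounded, which is \eqref{eqn.small.ball.upper.bound}.

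The packing and covering counts on $\mathbb{S}^{2}$ and the Gaussian anti-concentration step are routine. The hard part will be the asymptotic inversion $\rho^{-1}(u)\sim u/\sqrt{\lvert\log u\rvert}$ via the Lambert function, which pins down the precise form of $\psi$ (hence of $\phi$), together with the bookkeeping that keeps all constants uniform over the stated range: capping the construction at radius $\varepsilon_{1}$, so that \eqref{eqn.bound.variance.1} and the metric comparison apply, costs only a multiplicative constant because $r$ is at most the injectivity radius of $\mathbb{S}^{2}$.
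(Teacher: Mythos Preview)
Your proposal is correct and follows essentially the same approach as the paper: part (i) bounds the $d_{T}$-covering number of $B_{r}(x)$ via the Lambert-function inversion of $\rho$ and invokes Talagrand's small-ball lemma, while part (ii) runs the standard sequential-conditioning argument powered by the strong local nondeterminism \eqref{eqn.bound.variance.1}. The paper differs only in presentation --- it computes the volume of a $\rho$-ball directly rather than inverting $\rho$, builds its separated set from an explicit grid in local coordinates rather than from a packing, and cites Anderson's inequality rather than the Gaussian density bound --- but the structure and the key use of the Lambert asymptotics $\rho^{-1}(u)\sim u/\sqrt{\lvert\log u\rvert}$ are the same.
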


\begin{rmk}
    Note that part (i) in Proposition \ref{prop.weak.SD} does not depend on the strong local nondeterminism \eqref{eqn.bound.variance.1}. Moreover, if the power spectrum has a decay $\ell^{-\alpha}$ for some $2<\alpha <4$, then \eqref{eqn.small.ball.lower.bound} has been proven in \cite{LanXiao18}.
\end{rmk}

    \begin{proof}
        (i) Let $N(B_{r} (x),d_{T} , \varepsilon)$ be the smallest number of $d_{T}$-balls of radius $\varepsilon$ needed to cover  $B_{r} (x)$. The canonical metric $d_{T}$ is equivalent to $\rho (d_{\mathbb{S}^{2}})$, and hence it is enough to consider $N(B_{r} (x),\rho (d_{\mathbb{S}^{2}}) , \varepsilon)$. The $\rho$-ball of radius $\varepsilon$ is given by
    \begin{align*}
        B^{\rho}_{\varepsilon}(x) = \left\{ y \in \mathbb{S}^{2} \, : \, \rho (d_{\mathbb{S}^{2}} (x,y)) < \varepsilon \right\} = \left\{ y \in \mathbb{S}^{2} \, : \, d_{\mathbb{S}^{2}} (x,y) \sqrt{\vert \log d_{\mathbb{S}^{2}} (x,y)\vert }< \varepsilon \right\},
    \end{align*}
    and at the north pole $x=N$ it becomes 
    \begin{align*}
        B^{\rho}_{\varepsilon}(N) = \left\{\left( \varphi , \theta \right)\in [0,2\pi)\times [0,\pi] , \;\; \theta^{2} \vert \log \theta \vert < \varepsilon^{2} \right\}.
    \end{align*}
    Note that the equation $\theta^{2} \vert \log \theta \vert = \varepsilon^{2}$ has three distinct real solutions for $\varepsilon$ small, $0<\theta_{1}(\varepsilon)<\theta_{2}(\varepsilon)<1< \theta_{3}(\varepsilon)$ such that
    \begin{align*}
        & \theta_{1}(\varepsilon) \rightarrow 0, & \theta_{2}(\varepsilon) \rightarrow 1, & & \theta_{3}(\varepsilon) \rightarrow 1,
    \end{align*}
    as $\varepsilon \rightarrow 0$. Due to isotropy, we can assume $x=N$, and since we are interested in a small ball asymptotics around $x=N$, we have that $\theta=d_{\mathbb{S}^{2}}(y,N) <<1$.  Thus, we can write
    \begin{align*}
        & \text{Vol}\left( B^{\rho}_{\varepsilon}(N)\right) =  2\pi \int_{0}^{\theta_{1}(\varepsilon)} \sin \theta d\theta = 2\pi \left(1-\cos \theta_{1}(\varepsilon) \right) = \pi  \theta_{1} (\varepsilon)^{2} + o \left(  \theta_{1} (\varepsilon)^{2} \right).
    \end{align*}

    By Lemma \ref{l.approx} we have that 
    \begin{align*}
        & \theta_{1} (\varepsilon)^{2}  = 4\varepsilon^{2} \vert \log \varepsilon\vert^{-1} +o\left( \varepsilon^{2} \vert \log \varepsilon\vert^{-1} \right) ,
    \end{align*}
    that is,
    \begin{align*}
        & \text{Vol}\left( B^{\rho}_{\varepsilon}(N)\right) =  4\pi\varepsilon^{2} \vert \log \varepsilon\vert^{-1} +o\left( \varepsilon^{2} \vert \log \varepsilon\vert^{-1} \right),
    \end{align*}
    and hence 
    \begin{align*}
        N(B_{r} (N),\rho (d_{\mathbb{S}^{2}}) , \varepsilon)\leqslant \frac{\text{Vol} \left( B_{r} (N) \right)}{\text{Vol} (B^{\rho}_{\varepsilon}(N))} = \frac{1-\cos r}{4\pi\varepsilon^{2} \vert \log \varepsilon\vert^{-1} +o\left( \varepsilon^{2} \vert \log \varepsilon\vert^{-1} \right)} =\frac{r^2}{\varepsilon^{2} \vert \log \varepsilon\vert^{-1}} F(r,\varepsilon),
    \end{align*}
    where $F(r,\varepsilon)$ is uniformly bounded in $\varepsilon$ and $r$. Thus we can take 
    \begin{align*}
        \psi(r,\varepsilon):= \frac{r^2}{\varepsilon^{2} \vert \log \varepsilon\vert^{-1}} ,
    \end{align*}
    which satisfies the assumption of Lemma \ref{lemma.talagrand}.

    (ii)  The proof relies on a conditioning argument similar to the proof of \cite[Proposition 4.2]{LX}. Let $F: A_{r} \rightarrow B_{r} (x)$  be a local diffeomorphism, where $A_{r} \subset \mathbb{R}^{2}$ is a rectangle and $F(0,0) = x$. Let us consider a rectangle $I= [0, r] \times [0, r]$ for $r$ small enough so that $I\subset A_{r}$, and let us  divide $I$ into sub-rectangles of side-length
    \[
        \frac{r}{\bigg\lfloor \frac{r}{\varepsilon \vert \log \varepsilon \vert^{-\frac{1}{2}}} \bigg \rfloor +1} ,
    \]
    where $\lfloor \cdot \rfloor$ denotes the floor function. The number of such subrectangles is
    \[
    N =  \left( \bigg\lfloor \frac{r}{\varepsilon  \vert \log \varepsilon \vert^{-\frac{1}{2}}} \bigg \rfloor +1 \right)^{2} \geqslant \frac{r^{2}}{\varepsilon^{2} \vert \log \varepsilon \vert^{-1}} 
    \]
    Let $(\theta_{i} , \varphi_{i} )$ for $i=1,\ldots , N$ denote the upper right-most vertex of the subrectangle, and set $x_{i}:= F(\theta_{i}, \varphi_{i})$. Then by construction 
    \begin{align*}
         d_{\mathbb{R}^{2}} \left( (\theta_{i}, \varphi_{i}) , (\theta_{j}, \varphi_{j})  \right) \geqslant c \, \frac{r}{\bigg\lfloor \frac{r}{\varepsilon  \vert \log \varepsilon \vert^{-\frac{1}{2}}} \bigg \rfloor +1} \quad \text{ for all } i,j=1,\ldots , N
    \end{align*}
    for some constant $c>0$. We can choose $F$ in such a way that $d_{\mathbb{S}^{2}} (x_{i} , x_{j} ) \geqslant d_{\mathbb{R}^{2}} \left( (\theta_{i}, \varphi_{i}) , (\theta_{j}, \varphi_{j})  \right)$. Set 
    \begin{align*}
        A_{j} := \left\{ \max_{i=1, \ldots , j} \vert T(x_{i}) - T(x) \vert < \varepsilon \right\},
    \end{align*}
    and write 
    \begin{align*}
        \mathbb{P} (A_{j} ) = \mathbb{E} \left[ 1_{A_{j-1}} \, \mathbb{P} \left( \vert T(x_{j}) - T(x) \vert < \varepsilon \, \left. \right| T(x_{i}) \, : \, i=0, \ldots , j-1  \right) \right].
    \end{align*}
    By \eqref{eqn.bound.variance.1} we have that 
    \begin{align*}
         &   \text{Var} \left( T(x_{j}) \, | \, T(x_{i}) \, : \, i=0, \ldots , j-1   \right) \geqslant c_{2} \min_{1\leqslant i \leqslant j-1} \rho \left(d_{\mathbb{S}^{2}} (x_{j}, x_{i}) \right)^{2} 
         \\
         & \geqslant c_{2} \, \rho \left(  \frac{c\;r}{\bigg\lfloor \frac{r}{\varepsilon  \vert \log \varepsilon \vert^{-\frac{1}{2}}} \bigg \rfloor +1} \right)^{2} \geqslant c_{2} \,  \rho \left(  \frac{c\;r}{\frac{2r}{\varepsilon  \vert \log \varepsilon \vert^{-\frac{1}{2}}}} \right)^{2}  = c_{2} \,  \rho \left( \frac{c}{2} \varepsilon  \vert \log \varepsilon \vert^{-\frac{1}{2}} \right)^{2}  
         \\
         & = \frac{c_{2}}{4}c^{2} \varepsilon^{2}  \vert \log \varepsilon \vert^{-1} \left| \log \left(\frac{c}{2}\varepsilon  \vert \log \varepsilon \vert^{-\frac{1}{2}} \right) \right| \geqslant D \varepsilon^{2},
    \end{align*}
    where we used that $\rho(t)$ is an increasing function for small values of $t>0$, and that 
    \begin{align*}
        \lim_{\varepsilon\rightarrow 0}  \vert \log \varepsilon \vert^{-1} \left| \log \left(\frac{c}{2}\varepsilon  \vert \log \varepsilon \vert^{-\frac{1}{2}} \right) \right|=1.
    \end{align*}
    The conditional distribution of $T(x_{j})$ given all the $T(x_{i})$ is Gaussian with conditional variance $ \text{Var} \left( T(x_{j}) \, | \, T(x_{i}) \, : \, i=0, \ldots , j-1   \right)$ since the field $T(x)$ is Gaussian. Then by the above lower bound on the conditional variance and Anderson's inequality \cite{Anderson}  it follows that 
    \begin{align*}
        & \mathbb{P} \left( \vert T(x_{j}) - T(x) \vert < \varepsilon \, \left. \right| T(x_{i}) \, : \, i=0, \ldots , j-1  \right) \leqslant \mathbb{P} \left( \vert Z \vert \leqslant \frac{1}{\sqrt{D}} \right)\leqslant \exp(-C),
    \end{align*}
    where $Z$ is a standard Gaussian random variable and the last inequality holds for some constant $C>0$. Thus,
    \begin{align*}
        \mathbb{P} (A_{N}) \leqslant \exp(-C)  \mathbb{P} (A_{N-1}) \leqslant \cdots  \leqslant \exp( -NC),
    \end{align*}
    and hence 
    \begin{align*}
         & \mathbb{P} \left( M_{r} (x) < \varepsilon \right) \leqslant  \mathbb{P} \left( \max_{i=1, \ldots , N} \vert T(x_{i}) - T(x)) \vert < \varepsilon \right) = \mathbb{P} (A_{N}) 
         \\
         & \leqslant \exp( -NC) \leqslant \exp \left( - C \frac{r^{2}}{\varepsilon^{2}\vert \log \varepsilon \vert^{-1}}  \right).
    \end{align*}
    \end{proof}

\subsection{Chung's LIL}

The goal of this section is to prove a weaker version of \eqref{eqn.Chung.lil.sphere}. Let us recall the following notation 

\begin{align*}
   & p:= \liminf_{r\rightarrow 0} \phi (r) M_{r} (x), & \phi(r) := \sqrt{\frac{\log \vert \log r \vert}{r^{2}\vert \log r \vert}}, & & M_{r} (x) := \max_{y \in B_{r} (x)} \vert T(y) - T(x) \vert  .
\end{align*}

\begin{prop}\label{prop.weak.Chung}
     Let $\{ T (x) , \, x\in \mathbb{S}^{2} \}$  be an isotropic Gaussian field.

    (i) If \eqref{condition.power.spectrum} holds, then 
        \begin{equation}\label{eqn.chung.lower.bound}
            p \leqslant 2\sqrt{A_{1}}  \; \; a.s. 
        \end{equation}
       where $A_{1}$ is given by \eqref{eqn.small.ball.lower.bound}.

    (ii) If $\{ T (x) , \, x\in \mathbb{S}^{2} \}$ satisfies  \eqref{condition.power.spectrum} and \eqref{eqn.bound.variance.1}, then 
    \begin{equation}\label{eqn.chung.lower.bound}
         p \geqslant \sqrt{A_{2}}  \; \; a.s. 
        \end{equation}
        where $A_{2}$ is given by \eqref{eqn.small.ball.upper.bound}.
\end{prop}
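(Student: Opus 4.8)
The plan is to run the familiar two–sided scheme for a Chung-type law, feeding in Proposition~\ref{prop.weak.SD} as the sole probabilistic input: for part~(i) the bound $p\le 2\sqrt{A_1}$ will come from the small–ball \emph{lower} bound \eqref{eqn.small.ball.lower.bound} together with the second (independence) Borel--Cantelli lemma, and for part~(ii) the bound $p\ge\sqrt{A_2}$ will come from the small–ball \emph{upper} bound \eqref{eqn.small.ball.upper.bound} together with the first Borel--Cantelli lemma. The computation underlying both halves is this: for a fixed $c>0$, taking $\varepsilon=c/\phi(r)=c\,r\sqrt{|\log r|/\log|\log r|}$ gives $|\log\varepsilon|\sim|\log r|$ as $r\to0$, hence
\[
\psi(r,\varepsilon)=\frac{r^{2}|\log\varepsilon|}{\varepsilon^{2}}=c^{-2}\log|\log r|\,\bigl(1+o(1)\bigr),
\qquad \exp\!\bigl(-A\,\psi(r,\varepsilon)\bigr)=|\log r|^{-A/c^{2}+o(1)} .
\]
Thus along a sequence $r_{n}$ with $|\log r_{n}|$ growing polynomially in $n$, the two small-ball bounds become summable or non-summable according to whether $A/c^{2}$ is $>1$ or $<1$. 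I will also use that $r\mapsto M_{r}(x)$ is nondecreasing and that $\phi$ is decreasing near $0$.

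For part~(ii), fix $\delta\in(0,\sqrt{A_2})$ and put $c=\sqrt{A_2}-\delta$. Choose the \emph{slowly} decreasing sequence $r_{n}=\exp(-n/\log n)$, so that $|\log r_{n}|=n/\log n$, $\log|\log r_{n}|\sim\log n$ and, crucially, $r_{n+1}/r_{n}\to1$, equivalently $\phi(r_{n})/\phi(r_{n+1})\to1$. With $\varepsilon_{n}=c/\phi(r_{n})$ one has $\varepsilon_{n}\to0$ and $\varepsilon_{n}/\rho(r_{n})=c/\sqrt{\log|\log r_{n}|}\to0$, so Proposition~\ref{prop.weak.SD}(ii) applies and, by the asymptotic above,
\[
\mathbb{P}\bigl(M_{r_{n}}(x)<\varepsilon_{n}\bigr)\le\exp\!\bigl(-A_{2}\,\psi(r_{n},\varepsilon_{n})\bigr)=n^{-A_{2}/c^{2}+o(1)},
\]
which is summable since $A_{2}/c^{2}>1$. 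By the first Borel--Cantelli lemma, $M_{r_{n}}(x)\ge c/\phi(r_{n})$ for all large $n$, a.s. For $r\in(r_{n+1},r_{n}]$ and $n$ large we then get $\phi(r)M_{r}(x)\ge\phi(r_{n})M_{r_{n+1}}(x)\ge c\,\phi(r_{n})/\phi(r_{n+1})$, using monotonicity of $M_{\cdot}(x)$ and of $\phi$; letting $r\to0$ yields $p\ge c\liminf_{n}\phi(r_{n})/\phi(r_{n+1})=c=\sqrt{A_2}-\delta$, and $\delta\downarrow0$ gives $p\ge\sqrt{A_2}$. This half is essentially routine once the rate $\phi$ has been pinned down.

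For part~(i), fix $\delta>0$, put $c'=2\sqrt{A_1}+\delta$, take the \emph{fast} sequence $r_{n}=e^{-n}$ and $\varepsilon_{n}=c'/\phi(r_{n})$. It suffices to show that the events $\{M_{r_{n}}(x)<\varepsilon_{n}\}$ occur infinitely often, since then $\liminf_{r\to0}\phi(r)M_{r}(x)\le\liminf_{n}\phi(r_{n})M_{r_{n}}(x)\le c'$ and $\delta\downarrow0$ gives $p\le2\sqrt{A_1}$. Proposition~\ref{prop.weak.SD}(i) already gives $\mathbb{P}(M_{r_{n}}(x)<\varepsilon_{n})\ge\exp(-A_{1}\psi(r_{n},\varepsilon_{n}))=n^{-A_{1}/c'^{2}+o(1)}$ with a divergent sum, but these events are not independent: the smaller balls $B_{r_{m}}(x)$, $m>n$, sit inside $B_{r_{n}}(x)$ and $T$ is globally correlated. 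To get around this I would decouple the scales by conditioning on the exterior of $B_{r_{n}}(x)$ (or of a slightly larger ball): writing $\widehat{T}_{n}=T-\mathbb{E}\bigl[T\mid\sigma(T(y):d_{\mathbb{S}^2}(y,x)\ge r_{n})\bigr]$ on $B_{r_{n}}(x)$, the conditional increment variances only shrink, so $d_{\widehat{T}_{n}}\le d_{T}$ and the covering estimate of Proposition~\ref{prop.weak.SD}(i) together with Lemma~\ref{lemma.talagrand} still yields $\mathbb{P}\bigl(\max_{y\in B_{r_{n}}(x)}|\widehat{T}_{n}(y)-\widehat{T}_{n}(x)|<\tfrac12\varepsilon_{n}\bigr)\ge\exp\!\bigl(-A_{1}\,\psi(r_{n},\tfrac12\varepsilon_{n})\bigr)=n^{-4A_{1}/c'^{2}+o(1)}$, still a divergent sum since $4A_{1}/c'^{2}<1$; a second Borel--Cantelli argument (or, if exact independence cannot be arranged, a Kochen--Stone second-moment estimate on $\mathbb{P}(\{M_{r_{n}}(x)<\varepsilon_{n}\}\cap\{M_{r_{m}}(x)<\varepsilon_{m}\})$) then produces the infinitely-often statement. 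The factor $2$ in $2\sqrt{A_1}$ is exactly the price of the $\tfrac12\varepsilon_{n}$ split: since $\psi(r,\cdot)\propto\varepsilon^{-2}$, halving $\varepsilon_{n}$ quadruples $\psi$, which is why the threshold constant must double to keep the series divergent.

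I expect the decoupling in part~(i) to be the main obstacle. Two things need care. First, conditioning on the \emph{immediate} exterior leaves a conditional-mean correction $\mathbb{E}[T(y)-T(x)\mid\text{exterior}]$ that can be of order $\rho(r_{n})\gg\varepsilon_{n}$, so one must condition instead on the exterior of a well-chosen larger ball $B_{R_{n}}(x)$ with $\log R_{n}/\log r_{n}\to0$, and invoke the strong local nondeterminism \eqref{eqn.bound.variance.1} to keep both the correction $o(\varepsilon_{n})$ and the small-ball lower bound for $\widehat{T}_{n}$ intact. Second, one must extract enough (conditional) independence of the resulting events across $n$ despite the nested structure; here the relevant structural fact is that, since the angular power spectrum decays like $\ell^{-4}$, the field $T$ is morally a solution of $-\Delta_{\mathbb{S}^2}T=\text{white noise}$ and hence enjoys an approximate Markov property with respect to nested geodesic balls, which should make the decoupled increments at scales $r_{n}$ nearly independent for a sufficiently lacunary sequence.
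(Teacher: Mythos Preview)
Your part~(ii) is correct and is essentially the paper's argument: first Borel--Cantelli fed by the small-ball upper bound \eqref{eqn.small.ball.upper.bound}, then a sandwich along a sequence using monotonicity of $M_r$ and of $\phi$. Your sequence $r_n=\exp(-n/\log n)$ with $r_{n+1}/r_n\to1$ is slightly cleaner than the paper's $r_n=R^{-n}$ followed by an optimization over $R$, but the idea is identical.

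Part~(i) has a genuine gap. Your decoupling scheme---conditioning on the exterior of a larger ball and invoking an approximate Markov property---is not the mechanism the paper uses, and as you yourself flag, it is not clear it can be made to work: the field is not Markov, the heuristic ``$-\Delta_{\mathbb S^2}T=\text{white noise}$ so nested balls are approximately independent'' is nowhere established, and you end up invoking the strong local nondeterminism \eqref{eqn.bound.variance.1} to control the conditional-mean correction, which is \emph{not} assumed in part~(i). The paper decouples in \emph{frequency} rather than in space: writing $T=\sum_\ell\sqrt{C_\ell}\,T_\ell$ and choosing lacunary cutoffs $d_n$, the frequency bands $\Xi_n=\sum_{\ell=d_n+1}^{d_{n+1}}\sqrt{C_\ell}\,T_\ell$ are \emph{exactly} independent across $n$ because the $a_{\ell m}$ are i.i.d., so the second Borel--Cantelli lemma applies directly---no conditioning, no Kochen--Stone, no Markov assumption. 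Anderson's inequality transfers the small-ball lower bound \eqref{eqn.small.ball.lower.bound} from $T$ to $\Xi_n$, and the leftover $\Theta_n=T-\Xi_n$ is shown to satisfy $\phi(r_n)\max_{B_{r_n}(x)}|\Theta_n-\Theta_n(x)|\to0$ a.s.\ via Talagrand's large-deviation bound \eqref{lemma.talagrand.diameter} and a direct estimate on $d_{\Theta_n}$ using the addition formula and the power-spectrum decay. This spectral splitting is the missing idea, and it uses only \eqref{condition.power.spectrum}, as the statement requires.
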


\begin{proof}

    (i) 
    Let us set 
    \begin{align*}
        & r_{n} := e^{-n} \lfloor e^{n^{2}} \rfloor^{-1}, & d_{n} :=\lfloor e^{n^{2}} \rfloor ,
    \end{align*}
    and note that 
    \begin{align*}
        &r_{n} d_{n} = e^{-n} &  r_{n} d_{n+1} > e^{n}.
    \end{align*}
    Let us consider the random fields
    \begin{align*}
        & \Xi_{n} (x):= \sum_{\ell =d_{n}+1}^{d_{n+1}} \sqrt{C_{\ell}}  T_{\ell} (x)  , & \Theta_{n} (x) := T(x) -  \Xi_{n} (x) =  \sum_{\ell = 1}^{d_{n}} \sqrt{C_{\ell}}  T_{\ell} (x)  + \sum_{\ell = d_{n+1} +1}^{\infty} \sqrt{C_{\ell}} T_{\ell} (x)
    \end{align*}
    For a fixed $x\in \mathbb S^{2}$, the Gaussian random fields $\{\Xi_{n} (x)  \}_{n=1}^{\infty}$ are independent from each other and moreover, for each $n$ and $x$, $\Xi_{n} (x)$ is independent of $\Theta_{n} (x)$. It is enough to show that 
    \begin{align}
        & \sum_{n=1}^{\infty} \mathbb{P} \left( \phi (r_{n}) \max_{y\in B_{r_{n}} (x)} \vert  \Xi_{n} (y)  - \Xi_{n}(x) \vert \leqslant \gamma \right) = \infty , \quad \text{ for any } \gamma >2\sqrt{A_{1}},  \label{eqn.almost.there.1}
        \\
        & \sum_{n=1}^{\infty} \mathbb{P} \left( \phi (r_{n}) \max_{y\in B_{r_{n}} (x)} \vert \Theta_{n} (y) - \Theta_{n} (x) \vert > \varepsilon \right) < \infty , \quad \text{ for any } \varepsilon >0. \label{eqn.almost.there.2}
    \end{align}
    Indeed, by Borel-Cantelli Lemmas and \eqref{eqn.almost.there.1} \eqref{eqn.almost.there.2} it follows that 
    \begin{align*}
       &  \liminf_{r\rightarrow 0} \phi (r) M_{r} (x) \leqslant \liminf_{n\rightarrow \infty } \phi (r_{n}) M_{r_{n}} (x) 
       \\
       & \leqslant  \liminf_{n\rightarrow \infty } \phi (r_{n})  \max_{y\in B_{r_{n}} (x)} \vert \Xi_{n} (y) -\Xi_{n} (x) \vert  + \liminf_{n\rightarrow \infty } \phi (r_{n})  \max_{y\in B_{r_{n}} (x)} \vert \Theta_{n} (y) - \Theta_{n} (x) \vert \leqslant \gamma + \varepsilon,
    \end{align*}
    for any $\gamma > 2\sqrt{A_{1}}$ and $\varepsilon >0$. Let us first prove \eqref{eqn.almost.there.1}. By Anderson inequality \cite{Anderson} and Proposition \ref{prop.weak.SD} part (i), it follows that 
    \begin{align*}
         & \mathbb{P} \left( \phi (r_{n}) \max_{y\in B_{r_{n}} (x)} \vert \Xi_{n} (y) - \Xi_{n}(x) \vert \leqslant \gamma \right)   \geqslant   \mathbb{P} \left( \phi (r_{n}) \max_{y\in B_{r_{n}} (x)} \vert \Xi_{n} (y) -  \Xi_{n}(x) + \Theta_{n} (y) -\Theta_{n} (x) \vert \leqslant \gamma \right) 
         \\
         & =  \mathbb{P} \left( \phi (r_{n}) M_{r_{n}} (x) \leqslant \gamma \right) \geqslant \exp \left( - A_{1} \psi \left( r_{n} , \frac{\gamma}{\phi(r_{n})} \right)\right) 
         \\
         &=\exp\left( -\frac{A_{1}}{\gamma^{2}} \log a_{n} \left|1+\frac{1}{a_{n}} \log \gamma + \frac{1}{2a_{n}} \log \frac{ a_{n}}{\log a_{n}} \right| \right) \geqslant \exp\left( -\frac{4A_{1}}{\gamma^{2}} \log a_{n}  \right) = a_{n}^{-\frac{4A_{1}}{\gamma^{2}}},
    \end{align*}
    where $a_{n} := \vert \log r_{n} \vert$,    and \eqref{eqn.almost.there.1} is proven since $\gamma > 2\sqrt{A_{1}}$.

    We  now prove \eqref{eqn.almost.there.2} by means of \eqref{lemma.talagrand.diameter} with $K= B_{r} (x)$ and $X_{t} =\Theta_{n} (x)$. 
    First, let us bound the diameter $D$ of $B_{r}(x)$ with respect to the distance $d_{\Theta_{n}}$. By the addition formula for spherical harmonics we have that
    \begin{align*}
        &\max_{y\in B_{r} (x)} \mathbb{E} \left[ ( \Theta_{n} (y) - \Theta_{n} (x) )^{2} \right] 
        \\
        & = \max_{y\in B_{r} (x)} \left(  \sum_{\ell =1}^{d_{n}} C_{\ell} \sum_{m=-\ell}^{\ell} \left( Y_{\ell m} (x) - Y_{\ell m} (y)\right)^{2} + \sum_{\ell= d_{n+1}+1}^{\infty} C_{\ell} \sum_{m=-\ell}^{\ell} \left( Y_{\ell m} (x) - Y_{\ell m} (y)\right)^{2} \right)
        \\
        & =  \max_{y\in B_{r} (x)} \left(  \sum_{\ell = 1}^{d_{n}} C_{\ell} \frac{2\ell +1}{2\pi} (1- P_{\ell} (\cos \theta) ) +  \sum_{\ell = d_{n+1}+1}^{\infty} C_{\ell} \frac{2\ell +1}{2\pi} (1- P_{\ell} (\cos \theta) )  \right)  \leqslant S_{1} + S_{2}, 
    \end{align*}
    where $\theta:= d_{\mathbb S^{2}} (x,y)$ and 
    \begin{align*}
        &S_{1} : = \max_{y\in B_{r} (x)}  \sum_{\ell = 1}^{d_{n}} C_{\ell} \frac{2\ell +1}{2\pi} (1- P_{\ell} (\cos \theta) ) , & S_{2} := \max_{y\in B_{r} (x)} \sum_{\ell = d_{n+1}+1}^{\infty} C_{\ell} \frac{2\ell +1}{2\pi} (1- P_{\ell} (\cos \theta) ).
    \end{align*}
    Note that $\vert P_{\ell} (\cos \theta) \vert \leqslant 1$ and hence, by  \eqref{condition.power.spectrum},
    \begin{align*}
        & S_{2} =  \max_{y\in B_{r} (x)} \sum_{\ell = d_{n+1}+1}^{\infty} C_{\ell} \frac{2\ell +1}{2\pi} (1- P_{\ell} (\cos \theta) ) \leqslant  \sum_{\ell = d_{n+1}+1}^{\infty} C_{\ell} \frac{2\ell +1}{\pi}
        \\
        & \leqslant c \sum_{\ell = d_{n+1}}^{\infty} \ell^{-3} \leqslant c \int_{d_{n+1}}^{\infty} x^{-3}dx = c\,  \left( \frac{1}{d_{n+1}} \right)^{ 2} \leqslant c \, \left(r_{n} e^{-n} \right)^{2}.
    \end{align*}
    By Lemma \ref{lemma.taylor.expansion.Pl} it follows that 
    \begin{align*}
        & S_{1}  =  \max_{y\in B_{r} (x)}  \sum_{\ell = 1}^{d_{n}} C_{\ell} \frac{2\ell +1}{2\pi} (1- P_{\ell} (\cos \theta) ) \leqslant   \sum_{\ell = 1}^{d_{n}} C_{\ell} \frac{2\ell +1}{2\pi}  \max_{y\in B_{r} (x)} (1- P_{\ell} (\cos \theta) ) 
        \\
        & \leqslant c  \sum_{\ell = 1}^{d_{n}} \ell^{-3} ( \ell^{2} r_{n}^{2} + \ell^{4} r_{n}^{4} ) = c r_{n}^{2}   \sum_{\ell = 1}^{d_{n}} \ell^{-1} +   c r_{n}^{4}   \sum_{\ell = 1}^{d_{n}} \ell 
        \\
        & \leqslant c ( r_{n}^{2} \log d_{n} +  r_{n}^{4} d_{n}^{2}  ) =   c r_{n}^{2} \left( \log d_{n} +(r_{n} d_{n} )^{2} \right)
        \\
        & = c r_{n}^{2} \left( \log d_{n} + e^{-2n} \right) \leqslant  c r_{n}^{2} \log d_{n} \leqslant c\, n \, r_{n}^{2} . 
    \end{align*}
    Combining everything together it follows that 
    \begin{align*}
        D^{2} \leqslant S_{1} + S_{2} \leqslant    c\, n \, r_{n}^{2} .
    \end{align*}
     If $N_{\varepsilon}$ denotes the number of $d$-balls of radius $\varepsilon$ needed to cover $B_{r_{n}}(x)$, then proceeding as in the proof of Proposition \ref{prop.weak.SD} we have that 
    \[
    N_{\varepsilon} \leqslant \frac{r_{n}^{2}}{\varepsilon^{2}\vert \log \varepsilon \vert^{-1}}.
    \]
    Let us set $t:= \frac{\varepsilon^{2}}{r_{n}^{2}} \vert \log \varepsilon\vert^{-1}$, that is, $\varepsilon= \exp\left(\frac{1}{2} W_{-1}(-2tr_{n}^{2})\right)$, where $W_{-1}$ is the Lambert function, see the Appendix for more details. Then
    \begin{align*}
        & \int_{0}^{D} \sqrt{\log N_{\varepsilon}} d\varepsilon \leqslant \int_{0}^{ c\, \sqrt{n} \, r_{n} } \sqrt{\log \frac{r_{n}^{2}}{\varepsilon^{2}\vert \log \varepsilon \vert^{-1}} } d\varepsilon
        \\
        & = -\int_{0}^{c \, n \vert\log (c\, r_{n} \sqrt{n}) \vert}\sqrt{\vert \log t \vert} \exp\left(\frac{1}{2} W_{-1}(-2tr_{n}^{2})\right) \frac{1}{2} W_{-1}^{\prime}(-2tr_{n}^{2}) 2r_{n}^{2}dt
        \\
        & = \int_{0}^{c \, n \vert\log (c\, r_{n} \sqrt{n}) \vert}\sqrt{\vert \log t \vert}\exp\left(\frac{1}{2} W_{-1}(-2tr_{n}^{2})\right)\frac{1}{2t} \frac{W_{-1}(-2tr_{n}^{2})}{1+ W_{-1}(-2tr_{n}^{2})} dt,
    \end{align*}
    where we used \eqref{eqn.prop.lambert}. Then, by \eqref{eqn.prop.lambert} again it follows that 
    \begin{align}\label{eqn.const2}
        & \int_{0}^{D} \sqrt{\log N_{\varepsilon}} d\varepsilon \leqslant r_{n} \int_{0}^{c \, n \vert\log (c\, r_{n} \sqrt{n}) \vert}   \sqrt{\frac{\vert \log t\vert}{2t}}dt \leqslant c_{2}\,\sqrt{\frac{r_{n}^{2}}{n^3} \log n},
    \end{align}
    where the last inequality follows by Lemma \ref{lemma.integral.bound}. Set
    \begin{align*}
        u:= \frac{\varepsilon}{c_{1}\phi(r_{n})}  - c_{2}\,\sqrt{\frac{r_{n}^{2}}{n^3} \log n},
    \end{align*}
    where $c_{1}$ and $c_{2}$ are given by \eqref{lemma.talagrand.diameter} and \eqref{eqn.const2} respectively, and note that, for all $n$ sufficiently large
    \begin{align}\label{eqn.bound.u}
        & u> \frac{A}{\phi(r_{n})},
    \end{align}
    for some positive constant $A$. Then by   \eqref{lemma.talagrand.diameter}
    \begin{align*}
        & \mathbb{P} \left( \phi (r_{n}) \max_{y\in B_{r_{n}} (x)} \vert \Theta_{n} (y) - \Theta_{n}(x) \vert > \varepsilon \right)  \leqslant \mathbb{P} \left(  \max_{z, y\in B_{r_{n}} (x)} \vert  \Theta_{n} (y) -  \Theta_{n} (z) \vert > \frac{\varepsilon}{\phi (r_{n})} \right)
        \\
        & = \mathbb{P} \left(  \max_{z, y\in B_{r_{n}} (x)} \vert \Theta_{n} (y) - \Theta_{n} (z) \vert > c_{1} \left( u+  c_{2}\,\sqrt{\frac{r_{n}^{2}}{n^3} \log n} \right) \right)
        \\
        & \leqslant \mathbb{P} \left(  \max_{z, y\in B_{r_{n}} (x)} \vert \Theta_{n} (y) - \Theta_{n} (z) \vert > c_{1} \left( u+  \int_{0}^{D} \sqrt{\log N_{\varepsilon}} d\varepsilon \right) \right)
        \\
        &  \leqslant \exp \left( - \frac{u^{2}}{D^{2}}\right) \leqslant \exp \left( - \frac{A^{2}}{\phi (r_{n})^{2}D^{2}}\right)\leqslant \exp \left( - \frac{c}{\phi (r_{n})^{2}n r_{n}^{2}}\right) = \exp\left( - \frac{c (1+n)}{\log (n+n^2)} \right),
    \end{align*}
    which completes the proof since 
    \begin{align*}
        \sum_{n=1}^{\infty} \exp\left( - \frac{c (1+n)}{\log (n+n^2)} \right) < \infty.
    \end{align*}

    (ii) For $n \geqslant 1$ and $R>1$ let us set 
    \begin{align*}
        & r_{n} := R^{-n} , & 0 < \gamma <  \frac{ \sqrt{ A_{2}} }{2R \sqrt{\log R}}  , & & A_{n} := \left\{ \phi (r_{n-1}) M_{r_{n}}(x)  \leqslant \gamma \right\}.
    \end{align*}
    Then by \eqref{eqn.small.ball.upper.bound} we have that 
    \begin{align*}
        & \mathbb{P} (A_{n} ) \leqslant \exp\left(-A_{2} \psi \left( r_{n} , \frac{\gamma}{\phi(r_{n-1})} \right) \right) 
        \\
        & = \exp\left( - \frac{A_{2}}{R^{2} \gamma^{2} \log R} \log \left((n-1)\log R\right) \left| 1- \frac{1}{n-1} \log \gamma+ \frac{1}{2(n-1)} \log \frac{\log \left( (n-1)\log R \right)}{(n-1)\log R} \right|   \right)
        \\
        & \leqslant \exp\left( - \frac{A_{2}}{4R^{2} \gamma^{2} \log R} \log \left((n-1)\log R\right)   \right)
        \\
        &= ((n-1) \log R)^{-\frac{A_{2}}{4\gamma^{2} R^{2} \log R}},
    \end{align*}
    for all $n$ large enough. Thus,  $\sum_{n=1}^{\infty} \mathbb{P} (A_{n} ) < \infty$ since $A_{2} >4 \gamma^{2} R^{2} \log R$. Note that $\phi (s) > \phi (t)$ for all $s<t$ small, and then 
    \begin{align*}
        \inf_{r_{n} \leqslant r \leqslant r_{n-1}} \phi (r) M_{r} (x) \geqslant \phi(r_{n-1}) M_{r_{n}}(x) .
    \end{align*}
Note that, for any $ 0 < \gamma <  \frac{ \sqrt{A_{2}}  }{2R \sqrt{\log R}} $
\begin{align*}
    & \mathbb{P} \left( \liminf_{r\rightarrow 0} \phi (r) M_{r} (x)< \gamma \right) \leqslant \mathbb{P} \left( \bigcup_{k\geqslant 1} \bigcap_{n\geqslant k} \left\{ \inf_{r_{n} \leqslant r \leqslant r_{n-1}} \phi (r) M_{r} (x)  < \gamma \right\}         \right)
    \\
    &  \leqslant \mathbb{P} \left( \bigcup_{k\geqslant 1} \bigcap_{n\geqslant k} \left\{ \phi(r_{n-1})M_{r_{n}}(x)  < \gamma \right\}         \right) = \mathbb{P} \left( \liminf_{n\rightarrow \infty} A_{n} \right) =0,
\end{align*}
    where the latter is zero by the first Borel-Cantelli Lemma. Thus, we have that 
    \begin{align*}
        \liminf_{r\rightarrow 0} \phi (r) M_{r} (x)  \geqslant \gamma \; \; a.s. \quad \text{ for every }  0 < \gamma < \frac{ \sqrt{A_{2}}  }{2R \sqrt{\log R}},
    \end{align*}
    and the result follows first by letting $R$ go $R^{\ast}$, where $R^{\ast}$ solves $2R^{\ast}\sqrt{\log R^{\ast}}=1$, and then  taking the supremum over $\gamma$ on both sides.

\end{proof}

\section{Proof of Theorem \ref{thm.main.new}}
    Let us set 
    \begin{align*}
        & c_{+} := \limsup_{\varepsilon \rightarrow 0} - \frac{1}{\psi(r,\varepsilon)} \log \mathbb{P} \left( M_{r}(x) < \varepsilon \right) ,
        & c_{-} := \liminf_{\varepsilon \rightarrow 0} - \frac{1}{\psi(r,\varepsilon)} \log \mathbb{P} \left( M_{r}(x) < \varepsilon \right) ,
    \end{align*}
    where $\psi$ is given by \eqref{eqn.psi}. By Proposition \ref{prop.weak.SD} we know that 
    \[
    0<A_{2} \leqslant c_{-} \leqslant c_{+} \leqslant A_{1} < \infty .
    \]
    The strategy of the proof is to show that 
    \begin{align*}
        c_{+} \leqslant   p^{2} \leqslant c_{-} ,
    \end{align*}
    where $p:= \liminf_{r\rightarrow 0} \phi(r) M_{r}(x)$.
    Let us first prove that $c_{+} \leqslant   p^{2}$. Let us fix $k\in (0,c_{+})$. Then there exists an $\varepsilon (k)$ such that 
    \begin{equation}\label{eqn.sd.upper.bound}
            \mathbb{P} \left(
            M_{r}(x)< \varepsilon \right)  \leqslant \exp \left( -k \,\psi (r,\varepsilon) \right),
    \end{equation}
    for any $\varepsilon \leqslant \varepsilon (k)$. Let $r_{n} := R^{-n}$ for some $R>1$ and let us choose $\gamma$ such that $0<4  \gamma^{2} R^{2} \log R < k$. Then, proceeding like in the proof of Proposition \ref{prop.weak.SD} it follows that 
    \begin{align*}
        &  \mathbb{P} \left(M_{r_{n}} (x) < \frac{\gamma}{\phi(r_{n-1})} \right) \leqslant \left( \frac{1}{(n-1) \log R} \right)^{\frac{k}{4  \gamma^{2} R^{2} \log R}},
    \end{align*}
    and hence 
    \begin{align*}
        & \sum_{n=1}^{\infty} \mathbb{P} \left( M_{r_{n}}(x)< \frac{\gamma}{\phi(r_{n-1})} \right)  < \infty,
    \end{align*}
    since $4  \gamma^{2} R^{2} \log R < k$. Hence by Borel-Cantelli Lemma we have that, almost surely for all large $n$ 
    \[
     M_{r_{n}}(x)\geqslant \frac{\gamma}{\phi(r_{n-1})}.
    \]
    The function $\phi(t)$ is decreasing for $t<<1$, and hence for $r_{n} \leqslant r \leqslant r_{n-1}$ we have that 
    \begin{align*}
        & M_{r}(x) \geqslant M_{r_{n}}(x)  \geqslant \frac{\gamma}{\phi(r_{n-1})} \geqslant   \frac{\gamma}{\phi(r)},
    \end{align*}
    which yields  
    \begin{align*}
        p := \liminf_{r\rightarrow 0} \phi(r) M_{r}(x) \geqslant  \gamma \; a.s.
    \end{align*}
    for any $\gamma < \sqrt{ \frac{k}{4R^{2}\log R} }<\sqrt{ \frac{c_{+}}{4R^{2}\log R} }$. The estimate  $c_{+} \leqslant   p^{2}$ then follows by first letting $R$ go to $R^{\ast}$, where $R^{\ast}$ solves $4R^{2}\log R=1$, then letting  $k$ to $c_{+}$, and lastly taking the supremum over $\gamma$.

    Let us now prove that $p^{2} \leqslant c_{-}$. Let $k>c_{-}$ be fixed. Then there exists an $\varepsilon(k)$ such that 
    \begin{align*}
            \mathbb{P} \left( M_{r}(x) < \varepsilon \right)  \geqslant \exp \left( -k \psi (r,\varepsilon) \right),
    \end{align*}
    for any $\varepsilon \leqslant \varepsilon (k)$. Let $\Xi_{n} (x) $, and $\Theta_{n} (x)$ be defined as in the proof of Proposition \ref{prop.weak.Chung}, and  $\gamma$ be a real number such that $k< \gamma^{2}$. Set $s_{n} := \exp\left( -\vert \log r_{n} \vert^{\frac{1}{4}} \right)$, and note that 
    \begin{align*}
        \vert \log r_{n}\vert = \vert \log s_{n}\vert^{4}.
    \end{align*}
    Then, proceeding as in the proof of Proposition \ref{prop.weak.Chung} we have that 
    \begin{align*}
        & \mathbb{P} \left( \phi (s_{n} ) \max_{y\in B_{s_{n}} (x)} \vert \Xi_{n} (y) - \Xi_{n}(x) \vert \leqslant\gamma \right)  \geqslant \mathbb{P} \left(  M_{s_{n}}(x) \leqslant \frac{\gamma}{\phi (s_{n} )} \right) 
        \\
        &\geqslant \exp \left( -\frac{4k}{\gamma^{2}} \log \vert\log s_{n} \vert \right) = \exp \left( -\frac{k}{\gamma^{2}} \log \vert\log r_{n} \vert \right) \geqslant \left( \frac{1}{ \vert \log r_{n} \vert } \right)^{\frac{k}{\gamma^{2}}} ,
    \end{align*}
    The events 
    \[
    \left\{  \phi (s_{n} ) \max_{y\in B_{s_{n}} (x)} \vert \Xi_{n} (y) -\Xi_{n}(x) \vert \leqslant \gamma  \right\}
    \]
    are independent, and hence by Borel-Cantelli Lemma for independent events it follows that 
    \[
    \liminf_{n\rightarrow \infty}   \phi (s_{n} ) \max_{y\in B_{s_{n}} (x)} \vert \Xi_{n} (y) -\Xi_{n} (x)\vert \leqslant \gamma  \, a.s. \, 
    \]
    for any  $\gamma^{2}> k >  c_{-}$, and hence, 
    \[
    \liminf_{n\rightarrow \infty}   \phi (s_{n} ) \max_{y\in B_{s_{n}} (x)} \vert\Xi_{n} (y) -\Xi_{n}(x) \vert \leqslant\sqrt{ c_{-}} \, a.s. \, 
    \]
    Note that as $n\rightarrow \infty$, the sequence $\frac{s_{n}}{r_{n}}$ stays bounded, ans hence \eqref{eqn.almost.there.2}  holds with $r_{n}$ replaced by $s_{n}$. Thus,  by \eqref{eqn.almost.there.2} we have that 
    \begin{align*}
        & p := \liminf_{r \rightarrow 0}   \phi (r )M_{r} (x) \leqslant  \liminf_{n\rightarrow \infty}   \phi (s_{n} )M_{s_{n}}(x)
        \\
        & \leqslant  \liminf_{n\rightarrow \infty}   \phi (s_{n} ) \max_{y\in B_{s_{n}} (x)} \vert\Xi_{n} (y)-\Xi_{n}(x) \vert + \liminf_{n\rightarrow \infty}   \phi (s_{n} ) \max_{y\in B_{s_{n}} (x)} \vert \Theta_{n} (y) -\Theta_{n}(x) \vert
        \\
        & \leqslant \sqrt{ c_{-}}  +\varepsilon
    \end{align*}
    for any $\varepsilon>0$, and hence the result follows by letting $\varepsilon$ go to zero.

\section{Appendix}

\subsection{Lambert function}\label{sub.sec.lambert}
We recall here the definition and basic properties of the Lambert function $W(x)$. For a given $x \geqslant -e^{-1}$, the Lambert function $W_{k}$, $k=0,1$, is defined as the solution to the equation 
\begin{equation}\label{eqn.lambert}
    y e^{y} =x.
\end{equation}
 The solution to equation \eqref{eqn.lambert} is denoted by 
 \begin{align*}
     & W_{0} (x), & x\geqslant 0, 
     \\
     &W_{-1} (x), & -e^{-1} \leqslant x <0.
 \end{align*}
The following properties are well known
\begin{align}
    & W_{-1}^{\prime} (x) = \frac{W_{-1}(x)}{x \left(1+ W_{-1}(x) \right)}, \text{ for any } -e^{-1} < x <0, & \lim_{x\rightarrow 0^{+}} \frac{W_{-1} (-x)}{\log x} =1. \label{eqn.prop.lambert}
    \\
    & W_{0} (x) = x+ \text{o} (x) \, \text{ as } x\rightarrow 0.  \nonumber
\end{align}

\begin{lem}\label{l.approx}
    Let $\theta_{i} (\varepsilon)$ for $i=1,2,3$ be the three distinct real solutions to 
    \begin{equation}\label{e.log}
            \theta^{2} \vert \log \theta \vert = \varepsilon^{2}
    \end{equation}

    for small $\varepsilon$. Then
    \begin{align}
        & \theta_{1} (\varepsilon)^{2}  = 2 \varepsilon^{2} \vert \ln 2\varepsilon^2 \vert^{-1} + \text{o}\left( \varepsilon^{2} \vert \ln \varepsilon \vert^{-1} \right)\label{e.asymptotic1}
        \\
        & \theta_{2} (\varepsilon)= 1-\varepsilon^{2} + \text{o} \left( \varepsilon^{2} \right), & \theta_{3} (\varepsilon)= 1+\varepsilon^{2} + \text{o} \left( \varepsilon^{2} \right). \label{e.asymptotic2}
    \end{align}
\end{lem}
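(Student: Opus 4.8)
The plan is to study the shape of the map $f(\theta):=\theta^{2}|\log\theta|$ on $(0,\infty)$ so that the three roots of \eqref{e.log} are pinned down, and then to treat them separately: $\theta_{1}(\varepsilon)$ via the Lambert function and $\theta_{2}(\varepsilon),\theta_{3}(\varepsilon)$ by a first-order expansion around $\theta=1$. To set up the first part, note that on $(0,1)$ one has $f(\theta)=-\theta^{2}\log\theta$ with $f'(\theta)=-\theta(2\log\theta+1)$, so $f$ increases on $(0,e^{-1/2})$ and decreases on $(e^{-1/2},1)$, with $f(0^{+})=f(1)=0$ and $\max_{(0,1)}f=f(e^{-1/2})=(2e)^{-1}$; on $(1,\infty)$, $f(\theta)=\theta^{2}\log\theta$ increases from $0$ to $\infty$. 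Hence for every $\varepsilon$ with $\varepsilon^{2}<(2e)^{-1}$ the equation \eqref{e.log} has exactly the three roots $0<\theta_{1}(\varepsilon)<e^{-1/2}<\theta_{2}(\varepsilon)<1<\theta_{3}(\varepsilon)$, and by monotonicity $\theta_{1}(\varepsilon)\to 0$, $\theta_{2}(\varepsilon)\to 1$, $\theta_{3}(\varepsilon)\to 1$ as $\varepsilon\to 0$.

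For $\theta_{1}$ I would substitute $s:=\theta_{1}(\varepsilon)^{2}\in(0,1)$, turning \eqref{e.log} into $s|\log s|=2\varepsilon^{2}$, i.e. $(\log s)\,e^{\log s}=-2\varepsilon^{2}$. Since $\theta_{1}\to 0$ forces $\log s\to-\infty$, the relevant branch is $W_{-1}$, so $\log s=W_{-1}(-2\varepsilon^{2})$ and, using $W(x)e^{W(x)}=x$,
\[
\theta_{1}(\varepsilon)^{2}=e^{W_{-1}(-2\varepsilon^{2})}=\frac{-2\varepsilon^{2}}{W_{-1}(-2\varepsilon^{2})}.
\]
Then, by the asymptotic $W_{-1}(-x)/\log x\to 1$ as $x\to 0^{+}$ recalled in Section~\ref{sub.sec.lambert}, one gets $W_{-1}(-2\varepsilon^{2})=(1+o(1))\log(2\varepsilon^{2})$, whence $\theta_{1}(\varepsilon)^{2}=(1+o(1))\,2\varepsilon^{2}|\log(2\varepsilon^{2})|^{-1}$; since $|\log(2\varepsilon^{2})|=2|\log\varepsilon|-\log 2\sim 2|\log\varepsilon|$, this is \eqref{e.asymptotic1} with error $o(\varepsilon^{2}|\log\varepsilon|^{-1})$.

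For $\theta_{2}$ and $\theta_{3}$ I would write $\theta=1+\delta$ with $\delta\to 0$; then $\log(1+\delta)=\delta+O(\delta^{2})$ and $(1+\delta)^{2}=1+O(\delta)$, so $f(1+\delta)=|\delta|\bigl(1+O(\delta)\bigr)$, and \eqref{e.log} gives $|\delta|=\varepsilon^{2}\bigl(1+O(\varepsilon^{2})\bigr)$. The root with $\delta<0$ is then $\theta_{2}(\varepsilon)=1-\varepsilon^{2}+o(\varepsilon^{2})$ and the one with $\delta>0$ is $\theta_{3}(\varepsilon)=1+\varepsilon^{2}+o(\varepsilon^{2})$, which is \eqref{e.asymptotic2}; the bootstrap can be made rigorous, if desired, by the implicit function theorem applied to $g(\delta):=(1+\delta)^{2}\log(1+\delta)$ at $\delta=0$, where $g'(0)=1\neq 0$.

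None of these steps is deep; the one requiring genuine care is the analysis of $\theta_{1}$, where I must confirm that the correct Lambert branch is $W_{-1}$ (forced by $\theta_{1}\to 0$) and that the $o(1)$ coming from $W_{-1}(-x)=(1+o(1))\log x$ indeed yields an error $o(\varepsilon^{2}|\log\varepsilon|^{-1})$ in \eqref{e.asymptotic1}. Both points are settled by the exact identity $\theta_{1}(\varepsilon)^{2}=-2\varepsilon^{2}/W_{-1}(-2\varepsilon^{2})$ together with $W_{-1}(-x)\sim\log x$.
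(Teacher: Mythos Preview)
Your proof is correct and follows essentially the same approach as the paper: the key step, expressing $\theta_{1}(\varepsilon)^{2}=\exp\bigl(W_{-1}(-2\varepsilon^{2})\bigr)$ via the Lambert function and reading off the asymptotic from $W_{-1}(-x)\sim\log x$, is identical. The only minor difference is in the treatment of the roots near $1$: the paper handles $\theta_{3}$ through the principal branch $W_{0}$ (using $W_{0}(x)=x+o(x)$) and $\theta_{2}$ by implicit differentiation of \eqref{e.log} to compute $w'(0)=0$, $w''(0)=2$, whereas you treat $\theta_{2}$ and $\theta_{3}$ uniformly by Taylor expanding $f(1+\delta)=|\delta|(1+O(\delta))$; your route is slightly more direct and symmetric, but both are elementary and yield the same expansions.
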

\begin{proof}
     It is then easy to see that 
    \begin{align*}
       &  \theta_{1}(\varepsilon) = \exp\left( \frac{1}{2} W_{-1} (-2\varepsilon^{2} )  \right), &\theta_{3} (\varepsilon) = \exp \left( \frac{1}{2} W_{0} (2\varepsilon^{2}) \right),
    \end{align*}
    and \eqref{e.asymptotic1} then follows from \eqref{eqn.prop.lambert}.
Let us recall that  $\theta_{2}$ is the solution to \eqref{e.log} such that $\theta_{2} <1$ and $\theta_{2} \rightarrow 1$ as $\varepsilon\rightarrow 0$. If we write $\theta_{2} (\varepsilon) = 1 - w(\varepsilon)$, and take derivatives of \eqref{e.log} it follows that 
\begin{align*}
    & w^{\prime} (\varepsilon) (1-w(\varepsilon)) \left( 1+2\log(1-w(\varepsilon))\right) = 2\varepsilon,
    \\
    & w^{\prime\prime} (\varepsilon)  (1-w(\varepsilon)) \left( 1+2\log(1-w(\varepsilon))\right) - (w^{\prime} (\varepsilon))^{2} \left( 3+ 2\log(1-w(\varepsilon))  \right)=2,
\end{align*}
and letting $\varepsilon\rightarrow 0$ we get that $w^{\prime}(0)=0$, $w^{\prime \prime}(0)=2$.    
\end{proof}

\subsection{Basic estimates}
For a proof of the following Lemmas, see e.g. \cite[Lemma B2, Lemma B3]{CarfagniniTodino25}.

\begin{lem}\label{lemma.taylor.expansion.Pl}
    Let $P_{\ell}$ be the $\ell$-th Legendre polynomial. Then there exists a constant $c$ such that for any $\theta$ small enough 
    \begin{equation}
        0< 1- P_{\ell} (\cos \theta )\leqslant c (\ell^{2} \theta^{2} + \ell^{4} \theta^{4} ).
    \end{equation}
\end{lem}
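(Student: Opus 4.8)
The plan is to prove the two inequalities separately. The left inequality $0<1-P_{\ell}(\cos\theta)$ is immediate from the classical bound $\sup_{t\in[-1,1]}|P_{\ell}(t)|=1$, the value $1$ being attained only at $t=\pm1$: for $\theta$ small (say $\theta\in(0,\pi/2]$) one has $\cos\theta\in[0,1)$, hence $P_{\ell}(\cos\theta)<1$ for every $\ell\geqslant1$.

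For the right inequality I would use the fundamental theorem of calculus applied to the smooth function $\psi\mapsto P_{\ell}(\cos\psi)$ on $[0,\theta]$. Since $\dfrac{d}{d\psi}P_{\ell}(\cos\psi)=-\sin\psi\,P_{\ell}'(\cos\psi)$, this gives
\[
1-P_{\ell}(\cos\theta)=\int_{0}^{\theta}\sin\psi\,P_{\ell}'(\cos\psi)\,d\psi .
\]
The key input is the classical estimate $\sup_{t\in[-1,1]}|P_{\ell}'(t)|=P_{\ell}'(1)=\tfrac{\ell(\ell+1)}{2}\leqslant\ell^{2}$ (the sup-norm on $[-1,1]$ of the derivative of a Legendre polynomial is attained at the endpoint $t=1$; alternatively this is Markov's inequality applied to the degree-$\ell$ polynomial $P_{\ell}$, normalized by $\|P_{\ell}\|_{\infty}=1$). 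Combining it with $0\leqslant\sin\psi\leqslant\psi$ yields
\[
1-P_{\ell}(\cos\theta)\leqslant\ell^{2}\int_{0}^{\theta}\psi\,d\psi=\tfrac12\,\ell^{2}\theta^{2}\leqslant c\,(\ell^{2}\theta^{2}+\ell^{4}\theta^{4}),
\]
with $c$ an absolute constant and the bound valid for every $\ell\geqslant1$ and $\theta\in(0,\pi]$. In fact this gives the sharper estimate $1-P_{\ell}(\cos\theta)\leqslant\tfrac12\ell^{2}\theta^{2}$; the form stated in the lemma, with the extra $\ell^{4}\theta^{4}$ term, also drops out naturally if one instead stops the expansion at second order, where the remainder contributes a term controlled by $\tfrac12\,\|P_{\ell}''\|_{\infty}\,\theta^{2}\cdot\tfrac12\theta^{2}$ and $\|P_{\ell}''\|_{\infty}=P_{\ell}''(1)=\tfrac{(\ell-1)\ell(\ell+1)(\ell+2)}{8}\leqslant\ell^{4}$.

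The computation above is routine; the only non-elementary ingredient is the fact that the sup-norms on $[-1,1]$ of the successive derivatives of $P_{\ell}$ are attained at $t=1$ and hence grow only polynomially (like $\ell^{2}$ for $P_{\ell}'$, like $\ell^{4}$ for $P_{\ell}''$) — this extremal/monotonicity property of the derivatives of Legendre (Gegenbauer) polynomials is classical, see e.g.\ Szeg\H{o}, and is the point where one must be careful about uniformity in $\ell$. If a self-contained argument is preferred, an alternative is to expand $P_{\ell}(\cos\theta)={}_{2}F_{1}(-\ell,\ell+1;1;\sin^{2}(\theta/2))$, check that when $\ell\theta\leqslant1$ the resulting alternating series has geometrically decreasing terms, so it is bounded by its first term $\ell(\ell+1)\sin^{2}(\theta/2)\leqslant\tfrac12\ell^{2}\theta^{2}$, and dispose of the complementary range $\ell\theta>1$ trivially, since there $1-P_{\ell}(\cos\theta)\leqslant2$ while $\ell^{2}\theta^{2}\geqslant1$.
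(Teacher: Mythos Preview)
Your argument is correct. The paper itself does not give a proof of this lemma; it simply refers the reader to \cite[Lemma~B2, Lemma~B3]{CarfagniniTodino25}. Your approach via the fundamental theorem of calculus together with the Markov-type bound $\sup_{[-1,1]}|P_\ell'|\leqslant\ell^{2}$ is clean and self-contained, and in fact yields the sharper estimate $1-P_\ell(\cos\theta)\leqslant\tfrac12\ell^{2}\theta^{2}$, from which the stated inequality with the additional $\ell^{4}\theta^{4}$ term follows trivially.
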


\begin{lem}\label{lemma.integral.bound}
   For any $p<1$ there exists an explicit constant $c_{p}$ such that for any $0<a<1$ 
   \begin{align}
       \int_{0}^{a} u^{-p} \sqrt{- \log u} du \leqslant c_{p} a^{1-p} \sqrt{-\log a}. \label{eqn.integral.bound}
   \end{align}
\end{lem}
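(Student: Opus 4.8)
The plan is to obtain the bound by a single integration by parts, which trades the integrand $u^{-p}\sqrt{-\log u}$ for the harmless $u^{-p}/\sqrt{-\log u}$; the latter is then controlled on $(0,a)$ by the monotonicity of $u\mapsto-\log u$. Since $p<1$ the exponent $1-p$ is positive, so $u^{-p}\sqrt{-\log u}$ is integrable near $0$ and $u^{1-p}\sqrt{-\log u}\to 0$ as $u\to 0^{+}$; this is the one place where the hypothesis $p<1$ is used, and it guarantees the boundary term below vanishes.

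Concretely, I would integrate by parts taking $u^{1-p}/(1-p)$ as an antiderivative of $u^{-p}$, which yields the identity
\[
\int_0^a u^{-p}\sqrt{-\log u}\,du=\frac{a^{1-p}\sqrt{-\log a}}{1-p}+\frac{1}{2(1-p)}\int_0^a\frac{u^{-p}}{\sqrt{-\log u}}\,du ,
\]
the boundary contribution at $0$ vanishing by the previous remark. For $0<u<a<1$ one has $-\log u>-\log a>0$, hence $1/\sqrt{-\log u}\le 1/\sqrt{-\log a}$, so the remaining integral is at most $a^{1-p}/\bigl((1-p)\sqrt{-\log a}\bigr)$. Combining the two estimates gives
\[
\int_0^a u^{-p}\sqrt{-\log u}\,du\le\frac{a^{1-p}}{1-p}\Bigl(\sqrt{-\log a}+\frac{1}{2(1-p)\sqrt{-\log a}}\Bigr) ,
\]
and for $a\le e^{-1}$, where $-\log a\ge 1$ and therefore $1/\sqrt{-\log a}\le\sqrt{-\log a}$, the right-hand side is at most $c_p\,a^{1-p}\sqrt{-\log a}$ with the explicit constant $c_p=(3-2p)/\bigl(2(1-p)^2\bigr)$.

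I do not expect a genuine obstacle; the estimate is elementary. The only point that needs care is that the clean bound $c_p\,a^{1-p}\sqrt{-\log a}$ cannot hold uniformly up to $a=1$: as $a\to 1^{-}$ the right-hand side tends to $0$ while the left-hand side tends to the positive constant $\Gamma(3/2)\,(1-p)^{-3/2}$, so one restricts to $a$ bounded away from $1$ (e.g. $a\le e^{-1}$), which is harmless since in every application of the lemma in the paper $a\to 0$. An equivalent route is to first substitute $u=e^{-t}$, turning the integral into the upper tail $\int_{-\log a}^{\infty}e^{-(1-p)t}\sqrt{t}\,dt$ of an incomplete Gamma integral and estimating that tail by the same integration by parts; I would present whichever version is shorter.
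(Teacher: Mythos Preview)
Your argument is correct and complete. The integration-by-parts identity is valid (the boundary term at $0$ vanishes because $1-p>0$), the monotonicity bound $1/\sqrt{-\log u}\le 1/\sqrt{-\log a}$ on $(0,a)$ is the right way to absorb the remainder, and your explicit constant $c_p=(3-2p)/\bigl(2(1-p)^2\bigr)$ checks out.

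The paper itself does not give a proof of this lemma; it simply cites \cite[Lemma B3]{CarfagniniTodino25}. So there is no ``paper's own proof'' to compare against here---your self-contained argument is at least as good as a citation.

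Your observation about the range of $a$ is also correct and worth flagging: the inequality as literally stated, for all $0<a<1$, cannot hold, since the left side tends to $\Gamma(3/2)(1-p)^{-3/2}>0$ while the right side tends to $0$ as $a\to 1^-$. The restriction $a\le e^{-1}$ (or any fixed $a_0<1$) repairs this, and in the paper's sole application of the lemma the upper limit tends to $0$, so nothing is lost.
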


\end{document}